\newtheorem{teo}{Theorem}[section]
\newtheorem{lm}[teo]{Lemma}
\newtheorem*{main}{Main Theorem}
\newtheorem{prop}[teo]{Proposition}
\theoremstyle{definition}
\newtheorem{rem}[teo]{Remark}
\newtheorem*{ack}{Acknowledgments}
\numberwithin{equation}{section}
\title[Orthotropic nonlinear diffusion]{Gradient estimates for\\ an orthotropic nonlinear diffusion equation}
\author[Bousquet]{Pierre Bousquet}
\author[Brasco]{Lorenzo Brasco}
\author[Leone]{Chiara Leone}
\author[Verde]{Anna Verde}
\address[P. Bousquet]{Institut de Math\'ematiques de Toulouse, CNRS UMR 5219
\newline\indent Universit\'e de Toulouse
\newline\indent F-31062 Toulouse Cedex 9, France.}
\email{pierre.bousquet@math.univ-toulouse.fr}
\address[L.\ Brasco]{Dipartimento di Matematica e Informatica
	\newline\indent
	Universit\`a degli Studi di Ferrara
	\newline\indent
	Via Machiavelli 35, 44121 Ferrara, Italy}
\email{lorenzo.brasco@unife.it}
\address[C.\ Leone \& A.\ Verde]{Dipartimento di Matematica ``R. Caccioppoli''
\newline\indent
Universit\`a degli Studi di Napoli ``Federico II''
\newline\indent
Via Cinthia, Complesso Universitario di Monte S. Angelo, 80126 Napoli, Italy}
\email{chiara.leone@unina.it}
\email{anna.verde@unina.it}
\subjclass[2010]{35K65, 35B65, 35K92}
\keywords{Degenerate parabolic equations, Lipschitz continuity, anisotropic operators.}
\begin{document}

\begin{abstract}
We consider a quasilinear degenerate parabolic equation driven by the orthotropic $p-$Laplacian. We prove that local weak solutions are locally Lipschitz continuous in the spatial variable, uniformly in time.
\end{abstract}

\maketitle

\begin{center}
\begin{minipage}{10cm}
\small
\tableofcontents
\end{minipage}
\end{center}

\section{Introduction}

\subsection{Aim of the paper}
Let $\Omega\subset\mathbb{R}^N$ be an open bounded set and $I\subset\mathbb{R}$ an open bounded interval. We study the gradient regularity of local weak solutions to the following parabolic equation 
\begin{equation}
\label{equationL}
u_t=\sum_{i=1}^N \left(|u_{x_i}|^{p-2}\,u_{x_i}\right)_{x_i},\qquad \mbox{ in }I\times\Omega.
\end{equation}
Evolution equations of this type have been studied since the 60's of the previous century, especially by the Soviet school, see for example the paper \cite{Vi} by Vishik. Equation \eqref{equationL} also explicitely appears in the monographs \cite{Li}, \cite[Example 4.A, Chapter III]{Sh} and \cite[Example 30.8]{Zei}, among others. 
\par
In this paper, we will focus on the case $p\ge 2$. We first observe that \eqref{equationL} looks quite similar to the more familiar one
\begin{equation}
\label{equationP}
u_t=\Delta_p u,\qquad \mbox{ in }I\times\Omega,
\end{equation}
which involves the $p-$Laplace operator 
\[
\Delta_pu=\sum_{i=1}^N \left(|\nabla u|^{p-2}\,u_{x_i}\right)_{x_i}.
\]
Indeed, both parabolic equations are particular instances of equations of the type
\[
u_t=\mathrm{div}\nabla F(\nabla u)
\]
with $F:\mathbb{R}^N\to\mathbb{R}$ a convex function which satisfies the structural conditions
\[
\langle \nabla F(z),z\rangle\ge \frac{1}{C}\,|z|^p\qquad \mbox{ and }\qquad |\nabla F(z)|\le C\,|z|^{p-1},\qquad \mbox{ for every }z\in\mathbb{R}^N.
\]
Then, the basic regularity theory equally applies to both \eqref{equationL} and \eqref{equationP}. The standard reference in the field is DiBenedetto's monograph \cite{Dib}, where one can find 
boundedness results for the solution $u$ (see \cite[Chapter V]{Dib}), H\"older continuous estimates for $u$ (see \cite[Chapter III]{Dib}), as well as Harnack inequality for positive solutions (see \cite[Chapter VI]{Dib}).  At a technical level, there is no distinction to be made between \eqref{equationL} and \eqref{equationP}.
\vskip.2cm\noindent
In contrast,  when coming to the regularity of $\nabla u$ (i.e. boundedness and continuity), the situation  becomes fairly more complicated. Let us start from \eqref{equationP}. DiBenedetto and  Friedman  \cite{DibFR} have proved that the gradients of this equation are bounded. This is the starting point to obtain the continuity of the gradients for any 
\[
p>\frac{2\,N}{N+2}.
\]  
We refer again to DiBenedetto's book for a comprehensive collection of results on the subject, notably to \cite[Chapter VIII]{Dib}. 
Since then, there has been a growing literature concerning the regularity for nonlinear, possibly degenerate or singular, parabolic equations (or systems), the main model of which is given by the evolutionary $p-$Laplacian equation \eqref{equationP}. Without any attempt to completeness, we can just mention some classical references \cite{DibFR1, Dib1, Wi, Chedib88,  Choe1}, up  to the most recent contributions on the subject, given by \cite{BDMi, KM1, KM2}, among others. 
\par
However, none of these results apply to our equation \eqref{equationL}. Indeed, all of them rely on the fact that the loss of ellipticity of the operator $\mathrm{div}\nabla F$ is restricted to a single point, since the Hessian $D^2 F$ behaves as in the model case \eqref{equationP}
\[
\langle D^2 F(z)\,\xi,\xi\rangle \ge \frac{1}{C}\, |z|^{p-2}\,|\xi|^2,
\]
where the elliptic character is lost only for $z=0$.
Such a property dramatically breaks down for our equation \eqref{equationL}. Indeed, in this case, the function $F$ has the following {\it orthotropic structure}
\begin{equation}
\label{orto}
F(z)=\frac{1}{p}\,\sum_{i=1}^N |z_i|^p,\qquad \mbox{ for every } z\in\mathbb{R}^N.
\end{equation}
The Hessian matrix of \(F\) now   {\it degenerates on an unbounded set}, namely the set of those \(z\in \mathbb{R}^N\) such that one component $z_i$ is \(0\). As a consequence,  the aforementioned references do not provide any regularity results for the gradients of the solutions. 
\vskip.2cm\noindent
The main goal of the present paper is to prove the $L^\infty$ bound on $\nabla u$ for our equation \eqref{equationL}, thus extending the result by DiBenedetto and Friedman to this more degenerate setting. In order to do this, we will need to adapt to the parabolic setting the machinery that we developed in \cite{BouBra, BouBraJul, BouBraLeoVer, BraCar} and \cite{BLPV}, for degenerate {\it equations with orthotropic structure}. Indeed, the operator 
\[
\sum_{i=1}^N \left(|u_{x_i}|^{p-2}\,u_{x_i}\right)_{x_i},
\]
that we called {\it orthotropic $p-$Laplacian}, is the prominent example of this kind of equations. We also refer to \cite{D} for an approach to this operator, based on viscosity techniques.
\subsection{Main result}

In this paper, we establish the following regularity result which can be seen as the parabolic counterpart of our previous result \cite[Theorem 1.1]{BouBraLeoVer} for the elliptic case. In the statement below, the notation \(\nabla u\) refers to the spatial variables, i.\,e. \(\nabla u=(u_{x_1}, \dots, u_{x_N})\).

\begin{main}
Let $p>2$ and let $u\in L^p_{\rm loc}(I;W^{1,p}_{\rm loc}(\Omega))$ be a local weak solution of \eqref{equationL}. Then  $\nabla u\in L^\infty_{\rm loc}(I\times \Omega)$. More precisely, for every parabolic cube
\[
\mathcal{Q}_{\tau,R}(t_0,x_0):=(t_0-\tau,t_0)\times(x_0-R,x_0+R)^N\Subset I\times \Omega,
\]
and every $0<\sigma<1$, we have
\begin{equation}
\label{stimapriori}
\begin{split}
\|\nabla u\|_{L^\infty(\mathcal{Q}_{\sigma\tau,\sigma R}(t_0,x_0))}&\le C\,\frac{1}{(1-\sigma)^\frac{N+2}{2}}\,\left(\frac{\tau}{R^2}\right)^\frac{1}{2}\,\left(\fint_{\mathcal{Q}_{\tau,R}(t_0,x_0)} |\nabla u|^p\,dt\,dx\right)^\frac{1}{2}\\
&+C\,\left((1-\sigma)\,\frac{R^2}{\tau}\right)^\frac{1}{p-2},
\end{split}
\end{equation}
for a constant $C=C(N,p)>0$. 
\end{main}
\begin{rem}[Scalings]
\label{rem:scaling}
We observe that the equation \eqref{equationL} is invariant with respect to the ``horizontal'' and ``vertical'' scale changes
\[
u_{\lambda,\mu}(t,x)=\mu\,u(\mu^{p-2}\,\lambda^p\,t,\lambda\,x),
\]
for every $\lambda,\mu>0$. Then it is easily seen that the a priori estimate \eqref{stimapriori} is invariant with respect to these scale changes. We point out that such estimate is the exact analogue of that for the evolutionary $p-$Laplacian, see \cite[Theorem 5.1, Chapter VIII]{Dib}.
Occasionally, in the paper we will work with {\it anisotropic} parabolic cubes of the type
\[
Q_R(t_0,x_0)=(t_0-R^p,t_0)\times (x_0-R,x_0+R)^N.
\]
The choice of cubes of this type could be loosely justified by a dimensional analysis of the equation. Indeed, by considering the quantity $u$ as dimensionless and using the family of scalings
\[
(t,x)\mapsto (\lambda^p\,t,\lambda\,x),\qquad \mbox{ for every } \lambda>0,
\]
we get the relation
\[
\mbox{ time } \sim (\mbox{length})^p.
\]
However, as it is well-known, estimates on cubes of the type $Q_R$ are too restrictive when looking at $C^{0,\alpha}$ estimates for $\nabla u$. Indeed, in light of the so-called {\it intrinsic geometry}, it is much more important to work with local estimates on cubes $\mathcal{Q}_{\tau,R}$, where the time scale $\tau$ is adapted to the solution itself: roughly  speaking, we can take
\[
\tau\sim R^2\,|\nabla u|^{p-2}.
\]
This explains the importance of having \eqref{stimapriori} with  two \emph{independent} scales $R$ and $\tau$.
We refer to \cite[Chapter VIII]{Dib} for a description of the method of intrinsic scalings, where these heuristics are clarified.
\end{rem}
\begin{rem}[Case $1<p\le 2$]
When \(p=2\), the orthotropic parabolic equation \eqref{equationL} boils down to the standard heat equation, for which solutions are well-known to be smooth. For this reason, in our statement we restrict our attention to the case $p>2$. However, we point out that by making the choice $\tau=R^2$ and taking the limit as $p$ goes to $2$ in \eqref{stimapriori}, we formally end up with the classical gradient estimate for solutions of the heat equation
\[
\|\nabla u\|_{L^\infty(\mathcal{Q}_{\sigma R^2,\sigma R}(t_0,x_0))}\le \frac{C}{(1-\sigma)^\frac{N+2}{2}}\,\left(\fint_{\mathcal{Q}_{R^2,R}(t_0,x_0)} |\nabla u|^2\,dt\,dx\right)^\frac{1}{2}.
\]
In light of the previous remark, in the case $p=2$ the relation 
\[
\mbox{ time } \sim (\mbox{length})^2,
\]
is now the natural one.
\par
As for the singular case $p<2$, this is somehow simpler than its degenerate counterpart. In this case, the local Lipschitz regularity of solutions to \eqref{equationL} can be directly inferred  from \cite[Theorem 1]{PV}, under the restriction 
\[
p>\frac{2\,N}{N+2}.
\] 
Indeed, the result of \cite{PV} covers (among others) the case of parabolic equations of the form
\[
u_t=\mathrm{div} \nabla F(\nabla u),
\]
under the following assumptions on the convex function $F$
\[
|\nabla F(z)|\le C\,|z|^{p-1}\qquad \mbox{ and }\qquad \langle D^2 F(z)\,\xi,\xi\rangle\ge \frac{1}{C}\,|z|^{p-2}\,|\xi|^2,\qquad \mbox{ for every }\xi\in\mathbb{R}^N, z\in \mathbb{R}^N\setminus \{0\}.
\]
It is not difficult to see that the orthotropic function \eqref{orto} for $p<2$ matches both requirements. Indeed, observe that 
\[
\langle D^2 F(z)\,\xi,\xi\rangle=(p-1)\,\sum_{i=1}^N |z_i|^{p-2}\,|\xi|^2\ge (p-1)\,|z|^{p-2}\,|\xi|^2,
\]
thanks to the fact that $p-2<0$.
Thus in the subquadratic case, the orthotropic structure helps more than it hurts, in a sense.
\end{rem}

\begin{rem}[Anisotropic diffusion]
We conclude this part by observing that, more generally, one could consider the following parabolic equation
\[
u_t=\sum_{i=1}^N \left(|u_{x_i}|^{p_i-2}\,u_{x_i}\right)_{x_i},\qquad \mbox{ in }\Omega\times I,
\]
which still has an orthotropic structure. Now we have a whole set of exponents $1<p_1\le p_2\le \dots p_N$, one for each coordinate direction. We cite the paper \cite{TT}, where some {\it global} Lipschitz regularity results are proven for solutions of the relevant Cauchy-Dirichlet problem, under appropriate regularity assumptions on the data. We point out that in light of their global nature, for $p_1=\dots=p_N=p>2$ such results are not comparable to ours.
We also refer to \cite{CMV} for a sophisticated Harnack inequality for positive local weak solutions, as well as for some further references on the problem. Finally, the very recent paper \cite{FVV} contains a thorough study of the Cauchy problem in the case $p_i<2$, together with some regularity results. 
\par
However, as for the counterpart of our Main Theorem for {\it local solutions} of this equation, this is still an open problem, to the best of our knowledge.
\end{rem}

\subsection{Technical aspects of the proof}
The core of the proof of the Main Theorem is an a priori Lipschitz estimate for smooth solutions of the orthotropic parabolic equation, see Proposition \ref{lm-apriori-estimate} below. More precisely, we introduce the regularized problem
\[
(u_\varepsilon)_t=\mbox{ div }\nabla F_\varepsilon (\nabla u_\varepsilon),
\]  
where \(F_\varepsilon\) is a smooth uniformly convex approximation of the orthotropic function \eqref{orto}.
By the classical regularity theory, the  maps \(u_\varepsilon\) are regular enough to justify all the calculations below. The goal is to establish a local uniform Lipschitz estimate on $u_\varepsilon$, which does not depend on the regularization parameter \(\varepsilon\). Finally, we let \(\varepsilon\) go to $0$ and prove that the family \(u_{\varepsilon}\) converges to the original solution \(u\). This allows to obtain the Lipschitz estimate for \(u\) itself.
\vskip.2cm\noindent
In the subsequent part of this section, we emphasize the main difficulties to get such a Lipschitz estimate on \(u_{\varepsilon}\). In order to simplify the presentation, we drop the index \(\varepsilon\) both for \(F_\varepsilon\) and \(u_\varepsilon\). The strategy is apparently quite classical: we rely on a  Moser iterative scheme of reverse H\"older's inequalities, resulting from  the interplay between Cacciopoli estimates and the Sobolev embeddings.
\par
To be more specific, we first differentiate the equation  with respect to a spatial variable \(x_j\), so as to get the equation solved by the $j-$th component of the gradient. This is given by
\begin{equation}\label{eq169}
\iint_{I\times \Omega}(u_{x_j})_t \,\varphi\,dt\,dx + \int_{I\times \Omega}\langle (\nabla F(\nabla u))_{x_j}, \nabla \varphi\rangle \,dt\,dx=0, \qquad \mbox{ for every }\varphi\in C^{\infty}_0(I\times \Omega).
\end{equation}
More generally,  the composition of the component $u_{x_j}$ with a non-negative convex function $h$ is a subsolution of this equation. Accordingly, the map $h(u_{x_j})$ satisfies 
the Caccioppoli inequality which is naturally attached to \eqref{eq169} (see Lemma \ref{lm:standard} below). If $I=(T_0,T_1)$ and $\tau\in (T_0,T_1)$, this reads as follows:
\begin{equation}
\begin{split}
\chi(\tau)\,\int_{\{\tau\}\times\Omega} h^{2}(u_{x_j})\,\eta^2\,dx&+
\iint_{(T_0,\tau)\times \Omega}\langle D^{2}F(\nabla u)\nabla h(u_{x_j}), \nabla h(u_{x_j}) \rangle\, \chi\,\eta^2\,dt\,dx \\
&\lesssim \int_{(T_0,\tau)\times \Omega} \chi'\, \eta^2\, h^{2}(u_{x_j})\,dt \,dx \\
&+  \iint_{(T_0,\tau)\times \Omega}\langle D^{2}F(\nabla u)\nabla \eta, \nabla \eta \rangle \, h^{2}(u_{x_j})\,\chi\,dt\,dx.
\label{eq177}
\end{split}
\end{equation}
Here, the maps \(\chi\in C^{\infty}_0(I)\) and \(\eta\in C^{\infty}_0(\Omega)\) are non-negative cut-off functions in the time and space variables, respectively. We have used the following expedient notation: given  $f\in L^1(I\times\Omega)$, 
\[
\int_{\{\tau\}\times\Omega} f\,dx:=\int_\Omega f(\tau,x)\,dx,\qquad \mbox{ for a.\,e. } \tau\in I.
\]
When \(F\) is a uniformly elliptic integrand, in the sense that
\[
\frac{1}{C}\,|\xi|^2\le \langle D^2 F(\nabla u)\,\xi,\xi\rangle\le C\,|\xi|^2,\qquad \mbox{ for every } \xi\in \mathbb{R}^N,
\] 
one can easily obtain from \eqref{eq177} a crucial ``unnatural'' feature of the subsolution $h(u_{x_j})$; that is, a sort of reverse Poincar\'e inequality where the Sobolev norm of $h(u_{x_j})$ is controlled by the $L^2$ norm of the subsolution itself. In conjunction with the Sobolev inequality, this is the cornerstone which eventually leads to
 the classical version of the Moser iterative scheme. It should be noticed that this strategy still  works even in the degenerate case, provided the Hessian behaves like
\[
\frac{1}{C}\,|\nabla u|^{p-2}\,|\xi|^2\le \langle D^2 F(\nabla u)\,\xi,\xi\rangle\le C\,|\nabla u|^{p-2}\,|\xi|^2,\qquad \mbox{ for every } \xi\in\mathbb{R}^N,
\]
as for the evolutionary $p-$Laplace equation \eqref{equationP}.
It is sufficient to use the ``absorption of degeneracy'' trick, where the degenerate weight $|\nabla u|^{p-2}$ is recombined with the subsolution $h(u_{x_j})$ by means of simple algebraic manipulations. This  still permits to infer from \eqref{eq177} a control on the Sobolev norm of a suitable convex function of $u_{x_j}$.
This is nowadays a standard technique in the field; for the elliptic case, it goes back to the pioneering works by Ural'tseva \cite{Ur} and Uhlenbeck \cite{Uh}.    
\vskip.2cm\noindent
As we explained above, due to the severe degeneracy of \(D^{2}F\) in our orthotropic situation, {\it it is not possible} to follow the same path. In order to rely on such an absorption trick, we have to go through a {\it tour de force}
and to introduce a new family of \emph{weird} Caccioppoli inequalities (see Lemma \ref{lm:weird} below). These are the parabolic counterparts of a corresponding estimate introduced in the elliptic setting in \cite{BouBra} and then fruitfully exploited in \cite{BouBraLeoVer}. 
\par
The crucial idea is to mix  together the components of the gradient with respect to 2 orthogonal directions. This  compensates the lack of ellipticity of \(D^2F\) and allows to rely on the Sobolev embeddings in the iterative scheme. 
We do not detail these Caccioppoli-type estimates here, but instead explain the main additional difficulties with respect to  the elliptic framework. 

Let us come back for one instant to the standard Caccioppoli inequality \eqref{eq177}. It follows from \eqref{eq169} by taking  \(\varphi= h\,h'(u_{x_j})\,\chi\, \eta^2\). In particular, the parabolic term is given by 
\[
\iint_{I\times\Omega}(u_{x_j})_t \,\varphi\,dt\,dx = \frac{1}{2}\iint_{I\times\Omega}(h^{2}(u_{x_j}))_t \, \chi \,\eta^2\,dt\,dx. 
\]
Then an integration by parts yields
\[
\iint_{I\times\Omega}(u_{x_j})_t \,\varphi\,dt\,dx = -\frac{1}{2}\iint_{I\times\Omega}h^{2}(u_{x_j})\,\chi'\,\eta^2\,dt\,dx. 
\] 
The latter yields  the ``time slice'' term on the left-hand side of \eqref{eq177}.
This way, the time derivative is transferred to \(\chi\) and one can handle the factor \(h^{2}(u_{x_j})\) as in  the elliptic framework.

For the weird Cacciopoli inequalities, the test function is now \(\varphi = u_{x_j}\,\Phi'(u_{x_j}^2)\,\Psi(u_{x_k}^2)\,\chi\, \eta^2\), for some \(1\leq j, k \leq N\). The corresponding parabolic term becomes:
\[
\iint_{I\times\Omega}(u_{x_j})_t \,\varphi\,dt\,dx = \frac{1}{2}\iint_{I\times\Omega}\big(\Phi(u_{x_j}^2)\big)_t\,\Psi(u_{x_k}^2) \, \chi \,\eta^2\,dt\,dx. 
\]
In contrast to the previous situation, we cannot perform an integration by parts  to get rid of the time derivative on \(\Phi(u_{x_j}^2)\), since it would affect the factor \(\Psi(u_{x_k}^2)\). In order to overcome this difficulty, which does not arise in the elliptic setting, we need a new approach, aimed at ``symmetrizing'' the above quantity containing $u_{x_j}$ and $u_{x_k}$.

Basically, we merge together two weird Cacciopoli inequalities, where the spatial variables \(x_j\) and \(x_k\) play symmetric roles. More specifically, we insert into \eqref{eq169} the test functions:
\[
\varphi = u_{x_j}\,\Phi'(u_{x_j}^2)\,\Psi(u_{x_k}^2)\,\chi\, \eta^2, \qquad \widetilde{\varphi} = u_{x_k}\,\Psi'(u_{x_k}^2)\,\Phi(u_{x_j}^2)\,\chi\, \eta^2,
\]      
and then add the two resulting inequalities. The parabolic term is now replaced by the following quantity:
\[
\frac{1}{2}\,\iint_{I\times\Omega}\left(\Phi(u_{x_j}^2)\,\Psi(u_{x_k}^2) \right)_t\,\chi\,\eta^2\,dt\,dx. 
\]
This allows to integrate by parts and transfer the time derivative on the test function. It turns out that by a suitable adaptation of  the arguments that we used in the elliptic case, one can incorporate this new  term in the iterative Moser scheme. This finally leads to the desired local $L^\infty$ estimate on $\nabla u$.

\subsection{Plan of the paper}

The paper is organized as follows: after collecting the basic terminology and some preliminaries  on Steklov averages in Section \ref{sec:2}, we present in Section \ref{sec:3} the proofs of the new Cacciopoli inequalities in the parabolic setting. We detail the iterative Moser scheme  in Section \ref{sec:4} and finally establish the Main Theorem in Section \ref{sec:5}, by transferring to the original solution \(u\) the a priori estimates obtained on the approximating solutions \(u_{\varepsilon}\).

\begin{ack}
We thank Alkis S. Tersenov for pointing out the reference \cite{TT}.
The paper has been partially written during a visit of P.\,B. \& L.\,B. to Napoli and of C.\,L. to Ferrara. Both visits have been funded by the Gruppo Nazionale per l'Analisi Matematica, la Pro\-ba\-bi\-li\-t\`a
e le loro Applicazioni (GNAMPA) through the project ``{\it Regolarit\`a per operatori degeneri con crescite generali\,}''. 
Hosting institutions are gratefully acknowledged. 
\par
C.\,L. and A.\,V. are members of the Gruppo Nazionale per l'Analisi Matematica, la Probabilit\`a
e le loro Applicazioni (GNAMPA) of the Istituto Nazionale di Alta Matematica (INdAM).
\end{ack}

\section{Preliminaries}
\label{sec:2}

\subsection{Local solutions}\label{subsection:local-solutions}
Let $\Omega\subset\mathbb{R}^N$ be an open bounded set and $I\subset \mathbb{R}$ an open bounded interval. 
Fix $p>2$ and take $\mathcal{A}:\mathbb{R}^N\to\mathbb{R}^N$ a continuous function such that
\[
\langle \mathcal{A}(z)-\mathcal{A}(w),z-w\rangle \ge 0,\qquad \mbox{ for every } z,w\in\mathbb{R}^N,
\]
and 
\[
\langle \mathcal{A}(z),z\rangle\ge \frac{1}{C}\,|z|^p,\qquad \mbox{ and }\qquad |\mathcal{A}(z)|\le C\,|z|^{p-1},\qquad \mbox{ for every }z\in\mathbb{R}^N.
\]
 We say that $u\in L^p_{\rm loc}(I;W^{1,p}_{\rm loc}(\Omega))$ is a {\it local weak solution} of the quasilinear diffusion equation
\begin{equation}
\label{equazione}
u_t=\mathrm{div\,}\mathcal{A}(\nabla u),\qquad \mbox{ in }I\times\Omega,
\end{equation}
if for every $\varphi\in C^\infty_0(I\times \Omega)$, we have
\[
-\iint_{I\times\Omega} u\,\varphi_t\,dt\,dx+\iint_{I\times \Omega} \langle \mathcal{A}(\nabla u),\nabla\varphi\rangle\,dt\,dx=0.
\]

\subsection{Steklov averages}

Throughout the paper, we denote by  \(T_0<T_1\) the endpoints of the time interval \(I\).
Let $v\in L^1_{\rm loc}(I\times\Omega)$. 
For every \(0<\sigma< T_1-T_0\), we define its so-called 
\begin{itemize}
\item {\it forward Steklov average}
\[
v_\sigma^+(t,x)=\fint_t^{t+\sigma} v(\tau,x)\,d\tau, \qquad \mbox{ for every }(t,x)\in (T_0, T_1-\sigma)\times\Omega;
\]
\item {\it backward Steklov average}
\[
v_\sigma^-(t,x)=\fint_{t-\sigma}^t v(\tau,x)\,d\tau, \qquad \mbox{ for every } (t,x)\in (T_0+\sigma, T_1)\times\Omega.
\]
\end{itemize}
We shall use some standard properties of the Steklov averages. Let \(0<\sigma<T_1-T_0\) and   \(\psi\in L^{\infty}(I\times\Omega)\) such  that \(\psi\) is compactly supported in \((T_0, T_1-\sigma)\times\Omega\). We extend \(\psi\) by \(0\) on \((\mathbb{R}\setminus I)\times \Omega\),  
so that \(\psi_{\sigma}^-\) is well-defined on \(I\times \Omega\) and compactly supported therein. 
By the Fubini theorem, we have
\begin{equation}\label{eq-Milano}
\iint_{(T_0, T_1-\sigma)\times\Omega} v_{\sigma}^+ \,\psi \,dt\,dx = \iint_{I\times\Omega} v\,\psi_{\sigma}^- \,dt\,dx.
\end{equation}
Moreover, if \(v\in L^{q}_{\rm loc }(I\times\Omega)\) for some \(1\le q<\infty\), then \(v_{\sigma}^+\) converges to \(v\) in \(L^{q}_{\rm loc}(I\times\Omega)\), as \(\sigma\) goes to \(0\), see e.g. \cite[Chapter I, Lemma 3.2]{Dib}.
\par
Finally, we can derive from \eqref{eq-Milano} the following regularity properties of the Steklov averages:

\begin{lm}\label{lm-Siena}
Let $v\in L^1_{\rm loc}(I\times\Omega)$. Then for every \(0<\sigma<T_1-T_0\),
\begin{enumerate}
\item the map \(v_{\sigma}^+\) belongs to \(W^{1,1}_{\rm loc}((T_0, T_1-\sigma); L^{1}_{\rm loc}(\Omega))\) and 
\begin{equation}
\label{eq-Pavia}
(v_\sigma^+)_t(t,x)=\frac{v(t+\sigma,x)-v(t,x)}{\sigma}, \qquad \mbox{ for a.e. } (t,x)\in (T_0, T_1-\sigma)\times\Omega;
\end{equation} 
\item if one further assumes that \(v\in L^{1}_{\rm loc}(I; W^{1,1}_{\rm loc}(\Omega))\), then
\(\nabla (v_\sigma^+) \in L^{1}_{\rm loc}((T_0, T_1-\sigma)\times\Omega)\) and
\begin{equation}
\label{eq-Brescia}
\nabla (v_\sigma^+) = (\nabla v)_{\sigma}^+.
\end{equation}
\end{enumerate}
\end{lm}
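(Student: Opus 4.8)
The plan is to read off both statements from the Fubini identity \eqref{eq-Milano}, by testing against smooth compactly supported functions and then identifying the distributional derivatives.

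\emph{Part (1).} Fix $\varphi\in C^\infty_0((T_0,T_1-\sigma)\times\Omega)$ and apply \eqref{eq-Milano} with $\psi=\varphi_t$ (admissible, being bounded and compactly supported in $(T_0,T_1-\sigma)\times\Omega$):
\[
\iint_{(T_0,T_1-\sigma)\times\Omega} v_\sigma^+\,\varphi_t\,dt\,dx=\iint_{I\times\Omega} v\,(\varphi_t)_\sigma^-\,dt\,dx .
\]
Since $\varphi$ is smooth, the fundamental theorem of calculus gives $(\varphi_t)_\sigma^-(t,x)=\sigma^{-1}\,(\varphi(t,x)-\varphi(t-\sigma,x))$, where we use the zero-extension of $\varphi$ outside $(T_0,T_1-\sigma)\times\Omega$. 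Inserting this, splitting the right-hand side, and performing the change of variables $t\mapsto t+\sigma$ in the term carrying $\varphi(t-\sigma,x)$ — legitimate because $\varphi$ is supported at times strictly less than $T_1-\sigma$, so the shifted variable stays in $I$, and because $v\in L^1_{\rm loc}$ makes every integral over the compact support of $\varphi$ finite — I arrive at
\[
\iint_{(T_0,T_1-\sigma)\times\Omega} v_\sigma^+\,\varphi_t\,dt\,dx=-\iint_{(T_0,T_1-\sigma)\times\Omega}\frac{v(t+\sigma,x)-v(t,x)}{\sigma}\,\varphi\,dt\,dx .
\]
A routine Fubini plus Jensen computation shows that $v_\sigma^+$ and $g(t,x):=\sigma^{-1}(v(t+\sigma,x)-v(t,x))$ both lie in $L^1_{\rm loc}((T_0,T_1-\sigma)\times\Omega)$, so this identity says precisely that $g$ is the distributional time derivative of $v_\sigma^+$. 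The standard characterization of Bochner--Sobolev maps through a locally integrable distributional derivative (obtained by mollification in time, cf.\ \cite[Chapter I]{Dib}) then yields $v_\sigma^+\in W^{1,1}_{\rm loc}((T_0,T_1-\sigma);L^1_{\rm loc}(\Omega))$ together with \eqref{eq-Pavia}.

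\emph{Part (2).} The argument is parallel, now using that the backward Steklov average only averages in the time variable. For $\Phi\in C^\infty_0((T_0,T_1-\sigma)\times\Omega;\mathbb{R}^N)$, apply \eqref{eq-Milano} componentwise with $\psi=\mathrm{div}_x\Phi$, observe that $(\mathrm{div}_x\Phi)_\sigma^-=\mathrm{div}_x(\Phi_\sigma^-)$, and integrate by parts in $x$ at a.e.\ fixed time: this is allowed since $\Phi_\sigma^-(t,\cdot)\in C^\infty_0(\Omega;\mathbb{R}^N)$ while $v(t,\cdot)\in W^{1,1}_{\rm loc}(\Omega)$ for a.e.\ $t$ by hypothesis. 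One is left with $-\iint_{I\times\Omega}\langle\nabla v,\Phi_\sigma^-\rangle\,dt\,dx$, and a last componentwise use of \eqref{eq-Milano} applied to the pair $(\nabla v,\Phi)$ turns this into $-\iint_{(T_0,T_1-\sigma)\times\Omega}\langle(\nabla v)_\sigma^+,\Phi\rangle\,dt\,dx$. Hence $\nabla(v_\sigma^+)=(\nabla v)_\sigma^+$ in the distributional sense, and since $\nabla v\in L^1_{\rm loc}(I\times\Omega)$ the same Fubini/Jensen bound gives $(\nabla v)_\sigma^+\in L^1_{\rm loc}((T_0,T_1-\sigma)\times\Omega)$, which is \eqref{eq-Brescia}.

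All the manipulations are elementary; the only points requiring care are bookkeeping ones — the zero-extension and the behaviour of supports near the right endpoint $T_1-\sigma$ when shifting the time variable — and, as the one mildly substantial step, the passage in Part (1) from a distributional time-derivative identity to genuine membership in the vector-valued space $W^{1,1}_{\rm loc}((T_0,T_1-\sigma);L^1_{\rm loc}(\Omega))$, which rests on a classical mollification argument unrelated to the specific structure of Steklov averages. It is convenient, though not strictly necessary, to reduce first to the case $v\in L^1(I\times\Omega)$ by restricting $v$ to a compact neighbourhood of the relevant data, since all the assertions are local.
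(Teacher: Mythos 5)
Your argument reproduces the paper's proof almost verbatim: both parts hinge on testing the Fubini identity \eqref{eq-Milano} against $\psi_t$ (respectively $\psi_{x_j}$, or in your case $\mathrm{div}_x\Phi$), computing the backward Steklov average of the smooth test function explicitly, and shifting the time variable; your Part (2) just bundles the coordinatewise computations into a single vector-valued test function. The only addition is your closing remark on upgrading the distributional identity to Bochner--Sobolev membership via mollification, a step the paper leaves implicit, so the proof is correct and follows essentially the same route.
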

\begin{proof}
Fix \(0<\sigma<T_1-T_0\).
Let \(\psi \in C^{\infty}_0((T_0, T_1-\sigma)\times\Omega)\). Then by \eqref{eq-Milano},
\[
\iint_{(T_0, T_1-\sigma)\times\Omega}v_\sigma^+ \,\psi_t\,dt\,dx 
=\iint_{I\times\Omega}v\,(\psi_t)_{\sigma}^-\,dt\,dx
= \iint_{I\times\Omega}v(t,x)\,\frac{\psi(t,x)-\psi(t-\sigma,x)}{\sigma}\,dt\,dx.
\]
By an obvious change of variables, this yields
\[
\iint_{(T_0, T_1-\sigma)\times\Omega}v_\sigma^+ \,\psi_t\,dt\,dx = -\iint_{(T_0, T_1-\sigma)\times\Omega}\frac{v(t+\sigma,x)-v(t,x)}{\sigma}\,\psi(t,x)\,dt\,dx,
\]
which gives the desired identity \eqref{eq-Pavia}.
\par
In order to prove \eqref{eq-Brescia}, we rely again on \eqref{eq-Milano}, this time tested with \(\psi_{x_j}\) in place of \(\psi\), for some \(1\leq j \leq N\):
\[
\iint_{(T_0, T_1-\sigma)\times\Omega}v_\sigma^+ \,\psi_{x_j}\,dt\,dx = \iint_{I\times\Omega}v\, (\psi_{x_j})_{\sigma}^-\, dt\,dx=\iint_{I\times\Omega}v\, (\psi_{\sigma}^-)_{x_j}\, \,dt\,dx.
\]
In the last equality, we have derived under the integral sign  the smooth function \(\psi\). Hence, by integrating by parts the last integral and using \eqref{eq-Milano} again, one gets 
\[
\iint_{I\times\Omega}v_\sigma^+ \,\psi_{x_j}\,dt\,dx =  -\iint_{(T_0, T_1-\sigma)\times \Omega}(v_{x_j})_{\sigma}^+\psi \,dt\,dx,
\]
from which \eqref{eq-Brescia} follows.
\end{proof}

\section{Energy estimates for a regularized equation}
\label{sec:3}

\subsection{An approximating equation}
We denote by 
\[
G(\xi)=\frac{1}{p}\left(1+|\xi|^2\right)^\frac{p}{2},\qquad \mbox{ for every } \xi\in\mathbb{R}^N,
\]
and for every \(\varepsilon \in (0,1)\), we consider the convex function 
\begin{equation}
\label{Fe}
F_{\varepsilon}(\xi) = \frac{1}{p}\,\sum_{i=1}^{N}|\xi_i|^{p} + \varepsilon\ G(\xi),\qquad \mbox{ for every } \xi\in\mathbb{R}^N.
\end{equation}
We consider a local weak solution \(u_{\varepsilon}\in L^{p}_{\rm loc}(I;W^{1,p}_{\rm loc}(\Omega))\) of the equation \eqref{equazione} with the choice
\[
\mathcal{A}(z)=\nabla F_\varepsilon(z).
\]
This means that $u_\varepsilon$ verifies
\begin{equation}
\label{weak-again}
-\iint_{I\times\Omega} u_{\varepsilon}\,\varphi_t\,dt\,dx+\iint_{I\times\Omega} \langle \nabla F_{\varepsilon}(\nabla u_{\varepsilon}) , \nabla \varphi\rangle \,dt\,dx=0,
\end{equation}
for every $\varphi\in C^\infty_0(I\times\Omega)$. Observe that the map \(F_\varepsilon\) belongs to \(C^{2}(\mathbb{R}^N)\) and satisfies
\[
\varepsilon\, (1+|\xi|^2)^\frac{p-2}{2}|\zeta|^2\leq \langle D^2 F_\varepsilon(\xi)\, \zeta, \zeta\rangle \leq (1+\varepsilon)\,(p-1)\,(1+|\xi|^2)^\frac{p-2}{2}|\zeta|^2, \qquad \mbox{ for every }\, \xi, \zeta \in \mathbb{R}^N.
\]
Hence, one can rely on  the classical regularity theory for quasilinear parabolic equations, see e.g. \cite[Theorem 5.1, Chapter VIII]{Dib} and \cite[Lemma 3.1]{BDM}, to get: 
\begin{equation}
\label{eq327}
\nabla u_\varepsilon\in L^\infty_{\rm loc} (I\times\Omega ) \qquad \mbox{ and } \qquad u_\varepsilon\in L^{2}_{\rm loc}(I; W^{2,2}_{\rm loc}(\Omega)).
\end{equation}
In the following computations, we delete the index \(\varepsilon\) both for \(u\) and \(F\).

\subsection{An equation for the spatial gradient}

In order to establish a Lipschitz bound on our solution \(u\), we need to differentiate \eqref{weak-again} with respect to the spatial variables \(x_j\), \(1\leq j \leq N\). 
\vskip.2cm
Fix \(0<\sigma<T_1-T_0\). Let $\psi\in C^\infty_0((T_0,T_1-\sigma)\times\Omega)$. As already observed, the backward Steklov average
\[
\varphi(t,x)=\psi_\sigma^-(t,x),\qquad \mbox{ for } (t,x)\in I\times\Omega,
\]
is compactly supported in $I\times\Omega$. 
We can thus insert it in \eqref{weak-again}: 
\[
-\iint_{I\times\Omega} u\, (\psi_\sigma^-)_t\,dt\,dx 
+\iint_{I\times\Omega}\langle \nabla F(\nabla u) , \nabla \psi_{\sigma}^-\rangle\,dt\,dx=0.
\]
Since \((\psi_\sigma^-)_t=(\psi_t)_{\sigma}^-\),  \eqref{eq-Milano} implies that
\[
-\iint_{I\times\Omega} u\,(\psi_\sigma^-)_t\,dt\,dx =-\iint_{(T_0, T_1-\sigma)\times\Omega} u_{\sigma}^+ \, \psi_t \,dt\,dx  = \iint_{(T_0, T_1-\sigma)\times\Omega} (u_{\sigma}^+)_t \, \psi \,dt\,dx.
\]
One thus gets
\begin{equation}
\label{coltempo}
\iint_{(T_0, T_1-\sigma)\times\Omega}(u_{\sigma}^+)_t\, \psi \,dt\,dx
+\iint_{I\times\Omega}\langle \nabla F(\nabla u) , \nabla \psi_{\sigma}^-\rangle\,dt\,dx=0,
\end{equation}
for every $\psi\in C^\infty_0((T_0,T_1-\sigma)\times\Omega)$. 
\vskip.2cm\noindent
Let \(j \in \{1, \dots, N\}\) and  \(\varphi\in C^{\infty}_0((T_0,T_1-\sigma)\times\Omega)\).  
The map
\[
(u_{\sigma}^+)_t (t,x)= \frac{u(t+\sigma,x)-u(t,x)}{\sigma},
\]
belongs to \(L^{p}_{\rm loc}((T_0, T_1-\sigma);W^{1,p}_{\rm loc}(\Omega))\) and \(((u_{\sigma}^+)_t)_{x_j}=((u_{x_j})_{\sigma}^+)_{t}\). By derivation under the integral sign, one also has 
\[
\nabla ( (\varphi_{x_j})_{\sigma}^-) = (\nabla (\varphi_{\sigma}^-))_{x_j}.
\]
We insert \(\psi = \varphi_{x_j}\) in equation \eqref{coltempo}. 
An integration by parts in the spatial variable leads to
\[
\iint_{(T_0, T_1-\sigma)\times\Omega}\left((u_{x_j})_{\sigma}^+\right)_{t} \varphi \,dt\,dx
+\iint_{I\times\Omega}\langle (\nabla F(\nabla u))_{x_j} , \nabla \varphi_{\sigma}^-\rangle\,dt\,dx=0.
\]
Finally, using \eqref{eq-Milano} in the second term, one gets
\begin{equation}
\label{eq1043}
\iint_{(T_0, T_1-\sigma)\times\Omega}\left((u_{x_j})_{\sigma}^+\right)_{t} \varphi \,dt\,dx
+\iint_{(T_0, T_1-\sigma)\times\Omega}\langle \left((\nabla F(\nabla u))_{x_j}\right)_\sigma^+ , \nabla \varphi\rangle\,dt\,dx=0.
\end{equation}
We observe that, since \(F\in C^{2}(\mathbb{R}^N)\) and \(\nabla u\in L^{\infty}_{\rm loc}(I\times\Omega)\cap L^{2}_{\rm loc}(I; W^{1,2}_{\rm loc}(\Omega))\), one has 
\[
\nabla F(\nabla u)\in L^{2}_{\rm loc}((0,T);W^{1,2}_{\rm loc}(\Omega)).
\] 
We can thus appeal to a density argument to get that \eqref{eq1043} remains true for every \(\varphi\in L^{2}(I;W^{1,2}(\Omega))\), with compact support in \((T_0,T_1-\sigma)\times \Omega\).

\subsection{Caccioppoli-type inequalities}

As explained in the introduction, the first technical tool in the proof of the Lipschitz bound of \(u\) is the following Cacciopoli inequality which provides a \(W^{1,2}\) estimate on \(h(u_{x_j})\), where \(h\) is any smooth convex function. 
\begin{lm}[Standard Caccioppoli inequality]
\label{lm:standard}
Let  $\eta\in C^{\infty}_0(\Omega)$ and $\chi \in C^{\infty}_0((T_0,T_1])$ be two non-negative functions, with $\chi$ non-decreasing. Let $h:\mathbb{R}\to \mathbb{R}$ be a $C^{1}$ convex non-negative function. Then, for almost every $\tau\in I$ and every $j=1,...,N$, we have
\begin{equation}
\begin{split}
\chi(\tau)\int_{\{\tau\}\times\Omega}h^2(u_{x_j})\,  \eta^2\,dx&
+\iint_{(T_0,\tau)\times\Omega} \langle D^2F(\nabla u)\,\nabla h(u_{x_j}), \nabla h(u_{x_j})\rangle\, \chi\, \eta^2\,dt\,dx \\
&\leq \iint_{(T_0,\tau)\times\Omega} \chi'\, \eta^2\, h^2(u_{x_j})\,dt\,dx\\
&+4\,\iint_{(T_0,\tau)\times\Omega}\langle D^2F(\nabla u)\,\nabla \eta, \nabla \eta\rangle\, h^2(u_{x_j})\,\chi \,dt\,dx.\label{eq1218}
\end{split}
\end{equation}
\end{lm}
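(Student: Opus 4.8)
I first reduce to the case where $h$ is smooth: a $C^1$, non-negative, convex function is the locally uniform limit of smooth, non-negative, convex functions $h_k$ with $h_k'\to h'$ locally uniformly (for instance $h_k=h*\rho_{1/k}$, convolution preserving both convexity and non-negativity). Since $\nabla u\in L^\infty_{\rm loc}(I\times\Omega)$ and $\nabla u_{x_j}\in L^2_{\rm loc}(I\times\Omega)$ by \eqref{eq327}, once \eqref{eq1218} is established for each $h_k$ it suffices to let $k\to\infty$, using $h_k(u_{x_j})\to h(u_{x_j})$ and $\nabla h_k(u_{x_j})=h_k'(u_{x_j})\,\nabla u_{x_j}\to h'(u_{x_j})\,\nabla u_{x_j}=\nabla h(u_{x_j})$ in $L^2_{\rm loc}$, together with dominated convergence on each term. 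So from now on $h$ is smooth and, as $h\ge 0$ is convex, $(h^2)''=2\,(h')^2+2\,h\,h''\ge 2\,(h')^2$.

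Fix $\tau\in I$, let $\sigma<T_1-\tau$, and let $\theta_\epsilon\in\mathrm{Lip}(\mathbb{R})$ be equal to $1$ on $(-\infty,\tau-\epsilon]$, affine on $[\tau-\epsilon,\tau]$ and $0$ on $[\tau,\infty)$. The idea is to test the identity \eqref{eq1043} — valid, as recalled there, for every $\varphi\in L^2(I;W^{1,2}(\Omega))$ with compact support in $(T_0,T_1-\sigma)\times\Omega$ — with
\[
\varphi=(h^2)'\big((u_{x_j})^+_\sigma\big)\,\chi\,\eta^2\,\theta_\epsilon .
\]
This is admissible: by Lemma~\ref{lm-Siena}, $(u_{x_j})^+_\sigma\in L^\infty_{\rm loc}\cap L^2_{\rm loc}(I;W^{1,2}_{\rm loc}(\Omega))$ with $\nabla(u_{x_j})^+_\sigma=(\nabla u_{x_j})^+_\sigma$, while $\chi\,\eta^2\,\theta_\epsilon$ is Lipschitz with support inside $(T_0,\tau]\times\Omega\subset(T_0,T_1-\sigma)\times\Omega$. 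By the chain rule in time (again legitimate by Lemma~\ref{lm-Siena}, since $(u_{x_j})^+_\sigma\in W^{1,1}_{\rm loc}$ in $t$), the parabolic term equals $\iint\partial_t\big[h^2((u_{x_j})^+_\sigma)\big]\,\chi\,\eta^2\,\theta_\epsilon$, and an integration by parts in $t$ transfers the derivative onto $\chi\,\theta_\epsilon$. Letting $\epsilon\to 0$, the contribution of $\theta'_\epsilon$ tends to the time-slice $\chi(\tau)\int_{\{\tau\}\times\Omega}h^2((u_{x_j})^+_\sigma)\,\eta^2\,dx$ (for every $\tau$, since $t\mapsto(u_{x_j})^+_\sigma(t,\cdot)$ is continuous into $L^1_{\rm loc}(\Omega)$), and the remaining time contribution becomes $-\iint_{(T_0,\tau)\times\Omega}h^2((u_{x_j})^+_\sigma)\,\chi'\,\eta^2$; in the elliptic term, $\big((\nabla F(\nabla u))_{x_j}\big)^+_\sigma=\big(D^2F(\nabla u)\,\nabla u_{x_j}\big)^+_\sigma$. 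Finally letting $\sigma\to 0$ — using the $L^2_{\rm loc}$-convergence of Steklov averages, $\nabla u\in L^\infty_{\rm loc}$, and the fact that $(u_{x_j})^+_\sigma(\tau,\cdot)\to u_{x_j}(\tau,\cdot)$ in $L^1_{\rm loc}(\Omega)$ for a.e.\ $\tau$ — one reaches, for a.e.\ $\tau\in I$, the exact identity
\begin{align*}
\chi(\tau)\int_{\{\tau\}\times\Omega} h^2(u_{x_j})\,\eta^2\,dx &-\iint_{(T_0,\tau)\times\Omega} h^2(u_{x_j})\,\chi'\,\eta^2\,dt\,dx\\
&+\iint_{(T_0,\tau)\times\Omega}\big\langle D^2F(\nabla u)\,\nabla u_{x_j}\,,\,\nabla\big[(h^2)'(u_{x_j})\,\chi\,\eta^2\big]\big\rangle\,dt\,dx=0 .
\end{align*}

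The rest is algebra. Expanding the last gradient produces $(h^2)''(u_{x_j})\,\chi\,\eta^2\,\nabla u_{x_j}+(h^2)'(u_{x_j})\,\chi\,\nabla(\eta^2)$. Since $\nabla h(u_{x_j})=h'(u_{x_j})\,\nabla u_{x_j}$, $D^2F$ is positive semi-definite and $(h^2)''\ge 2\,(h')^2$, the first contribution is $\ge 2\iint\langle D^2F(\nabla u)\,\nabla h(u_{x_j}),\nabla h(u_{x_j})\rangle\,\chi\,\eta^2$. For the second, use $(h^2)'(u_{x_j})\,\nabla u_{x_j}=2\,h(u_{x_j})\,\nabla h(u_{x_j})$, $\nabla(\eta^2)=2\,\eta\,\nabla\eta$, the Cauchy--Schwarz inequality for the bilinear form $D^2F(\nabla u)$, and Young's inequality in the form $4\,A\,B\le A^2+4\,B^2$, to bound it from below by
\[
-\iint\langle D^2F(\nabla u)\,\nabla h(u_{x_j}),\nabla h(u_{x_j})\rangle\,\chi\,\eta^2-4\iint h^2(u_{x_j})\,\langle D^2F(\nabla u)\,\nabla\eta,\nabla\eta\rangle\,\chi .
\]
Substituting both bounds into the identity and absorbing one copy of $\iint\langle D^2F(\nabla u)\,\nabla h(u_{x_j}),\nabla h(u_{x_j})\rangle\,\chi\,\eta^2$ from the left-hand side gives exactly \eqref{eq1218}. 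I expect the only genuinely delicate points to be the justification of the Steklov-averaged test function, the integration by parts in time, and the handling of the two successive limits $\epsilon\to0$ and $\sigma\to0$ (the time-slice term being precisely why the conclusion holds only for a.e.\ $\tau$), together with the preliminary smoothing of $h$; there is no real conceptual obstacle, the computation being a parabolic analogue of the classical Uhlenbeck--Ural'tseva scheme.
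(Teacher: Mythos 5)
Your argument is correct and is essentially the paper's proof. The test function you use, $(h^2)'((u_{x_j})_\sigma^+)\,\chi\,\eta^2\,\theta_\epsilon$, is the same as the paper's $(hh')((u_{x_j})_\sigma^+)\,\zeta\,\eta^2$ up to the factor $2$ and up to choosing $\zeta=\chi\,\theta_\epsilon$ directly instead of a general $\zeta$ first; the paper passes to the limit $\sigma\to 0$ before specializing $\zeta=\widetilde\chi_\delta\,\chi$ and sending $\delta\to 0$, while you send the time cutoff parameter $\epsilon\to 0$ first and then $\sigma\to 0$, but both orders are justified and lead to the same integral identity valid for a.e.\ $\tau$. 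The remaining algebra (using $(h^2)''\ge 2(h')^2$ rather than the paper's equivalent dropping of the $h\,h''$ term, then Cauchy--Schwarz for the bilinear form $D^2F$ plus Young's inequality) reproduces \eqref{eq1218} exactly, and the preliminary smoothing of $h$ is the same approximation the paper invokes at the end.
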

\begin{proof}We first assume that  $h$ is a $C^{2}$ convex non-negative function. 
Let $\zeta \in C^{\infty}_0(I)$ and \(\eta\in C^{\infty}_0(\Omega)\). There exists \(0<\sigma_1<(T_1-T_0)/2\) such that \(\zeta\) is compactly supported in \((T_0+\sigma_1, T_1-\sigma_1)\).
Given \(0<\sigma<\sigma_1\), Lemma \ref{lm-Siena} and \eqref{eq327} imply that \((u_{x_j})_{\sigma}^+ \in W^{1,1}_{\rm loc}((T_0, T_1-\sigma); L^{1}_{\rm loc}(\Omega))\cap L^{\infty}_{\rm loc}((T_0, T_1-\sigma)\times\Omega)\). Hence, the map \(h^{2}((u_{x_j})_{\sigma}^+ )\) belongs to \(W^{1,1}_{\rm loc}((T_0, T_1-\sigma); L^{1}_{\rm loc}(\Omega) )\) and we have
\begin{equation}\label{eq-Genova}
\frac{1}{2} \left(h^2\left(\left(u_{x_j}\right)_{\sigma}^+\right) \right)_t = (h\,h')\left(\left(u_{x_j}\right)_{\sigma}^+\right)\,\left((u_{x_j})_{\sigma}^+\right)_{t}.
\end{equation}
We insert in \eqref{eq1043} the test function
\[
\varphi = (h\,h')\left(\left(u_{x_j}\right)_{\sigma}^+\right)\,\zeta\, \eta^2,
\]
which has compact support in \((T_0+\sigma_1,T_1-\sigma_1)\times\Omega\) and belongs to \(L^{\infty}((T_0+\sigma_1,T_1-\sigma_1)\times\Omega)\cap L^{2}((T_0+\sigma_1,T_1-\sigma_1);W^{1,2}(\Omega))\).
By \eqref{eq-Genova},
\[
\left((u_{x_j})_{\sigma}^+\right)_{t} \varphi  
=
\frac{1}{2} \left(h^2\left(\left(u_{x_j}\right)_{\sigma}^+\right) \right)_t\,\zeta\, \eta^2.
\]
We use the above identity to infer:
\[
\frac{1}{2}\iint_{(T_0+\sigma_1,T_1-\sigma_1)\times\Omega}\left( h^2\left(\left(u_{x_j}\right)_{\sigma}^+\right) \right)_t\,\zeta\, \eta^2\,dt\,dx 
+\iint_{(T_0+\sigma_1,T_1-\sigma_1)\times\Omega}\langle \left((\nabla F(\nabla u))_{x_j}\right)_\sigma^+ , \nabla \varphi\rangle\,dt\,dx
=0.
\]
We then perform an integration by parts with respect to the time variable in the first term:
\[
-\frac{1}{2}\iint_{(T_0+\sigma_1,T_1-\sigma_1)\times\Omega}h^{2}\left(\left(u_{x_j}\right)_{\sigma}^+\right) \zeta' \eta^2\,dt\,dx 
+\iint_{(T_0+\sigma_1,T_1-\sigma_1)\times\Omega}\langle \left((\nabla F(\nabla u))_{x_j}\right)_{\sigma}^+ , \nabla \varphi\rangle\,dt\,dx=0.
\]
We now want to take the limit as $\sigma$ goes to $0$.  
Let \(\Omega_1\Subset \Omega\) such that \(\eta\) is compactly supported in \(\Omega_1\).
Since \(u_{x_j}\in L^{2}_{\rm loc}(I\times\Omega)\), we have
\[
\lim_{\sigma\to 0^+}\|\left(u_{x_j}\right)_{\sigma}^+-u_{x_j}\|_{L^2((T_0+\sigma_1,T_1-\sigma_1)\times\Omega_1)}=0.
\] 
Moreover, we know that \(u_{x_j}\in L^{\infty}_{\rm loc}(I\times\Omega)\) which guarantees that
there exists \(C_1>0\) such that for every \(\sigma\in (0, \sigma_1)\), 
\[
\left|\left(u_{x_j}\right)_{\sigma}^+\right|\leq C_1,\qquad \mbox{ a.\,e. on } (T_0+\sigma_1,T_1-\sigma_1)\times\Omega_1.
\] 
It then follows from the Dominated Convergence Theorem that 
\begin{equation}\label{eq443}
\lim_{\sigma\to 0^+}-\frac{1}{2}\iint_{(T_0+\sigma_1,T_1-\sigma_1)\times\Omega}h^{2}\left(\left(u_{x_j}\right)_{\sigma}^+\right) \zeta' \eta^2\,dt\,dx =-\frac{1}{2}\iint_{I\times\Omega}h^{2}\left(u_{x_j}\right) \zeta' \eta^2\,dt\,dx. 
\end{equation}
Next, by recalling the choice of $\varphi$ above, we have
\[
\nabla \varphi= (h\,h')'\left((u_{x_j})_{\sigma}^+\right)\,\nabla\left( (u_{x_j})_{\sigma}^+ \right)\,\zeta\, \eta^2 + (h\,h')\left((u_{x_j})_{\sigma}^+\right)\,\zeta\,\nabla(\eta^2).
\]
By Lemma \ref{eq-Brescia}, we know that \(\nabla\left( (u_{x_j})_{\sigma}^+ \right)=( \nabla u_{x_j})_{\sigma}^+\).
This implies that   \(\nabla\left( (u_{x_j})_{\sigma}^+ \right)\) converges to \(\nabla u_{x_j}\) in \(L^{2}((T_0+\sigma_1,T_1-\sigma_1)\times\Omega)\). Hence, a similar argument to the one leading to \eqref{eq443} implies that 
\[
\lim_{\sigma\to 0^+} \Big\|\nabla \varphi-\nabla \left((h\,h')(u_{x_j})\, \zeta\, \eta^2\right)\Big\|_{L^2((T_0+\sigma_1,T_1-\sigma_1)\times\Omega)}=0.
\]
Finally,  by using that \((\nabla F(\nabla u))_{x_j}\in L^{2}_{\rm loc}(I\times\Omega)\), we can infer that
\[
\lim_{\sigma\to 0^+}\Big\|\left((\nabla F(\nabla u))_{x_j}\right)_{\sigma}^{+}-(\nabla F(\nabla u))_{x_j}\Big\|_{L^{2}((T_0+\sigma_1,T_1-\sigma_1)\times\Omega_1)}=0.
\]
It follows that 
$$
\lim_{\sigma\to 0^+}\iint_{(T_0+\sigma_1,T_1-\sigma_1)\times\Omega}\langle \left((\nabla F(\nabla u))_{x_j}\right)_{\sigma}^+ , \nabla \varphi \rangle\,dt\,dx=\iint_{I\times\Omega}\langle (\nabla F(\nabla u))_{x_j} , \nabla \left((h\,h')(u_{x_j})\, \zeta\, \eta^2\right)\rangle\,dt\,dx.
$$
Up to now, we have thus proved: 
\begin{equation}\label{eq110511}
-\frac{1}{2}\iint_{I\times\Omega}h^{2}\left(u_{x_j}\right)\, \zeta'\, \eta^2\,dt\,dx 
+\iint_{I\times\Omega}\langle (\nabla F(\nabla u))_{x_j} , \nabla \left((h\,h')(u_{x_j})\, \zeta\, \eta^2\right)\rangle\,dt\,dx
=0. 
\end{equation}
We now choose \(\zeta\) as follows. Let \(\chi \in C^{\infty}_0((T_0,T_1])\) be as in the statement.
Given \(\tau\in I\) and \(\delta>0\) such that \(T_0<\tau <\tau+\delta<T_1\), we define
\[
\widetilde{\chi}_\delta(t):=
\left\{\begin{array}{ll}
1,& \textrm{ if } t\leq \tau,\\
1-\dfrac{t-\tau}{\delta},& \textrm{ if } \tau < t <\tau +\delta,\\
0,& \textrm{ if } t\geq \tau+\delta.
\end{array}
\right.
\]
We then insert 
\begin{equation}\label{eq_zeta}
\zeta(t)=\widetilde{\chi}_\delta(t)\,\chi(t),
\end{equation} 
in \eqref{eq110511}. Then, for almost every \(\tau \in I\), we can let \(\delta\) go to $0$ and obtain 
\begin{equation}
\label{eq1097}
\begin{split}
\frac{\chi(\tau)}{2}\,\int_{\{\tau\}\times\Omega}h^2(u_{x_j})\,  \eta^2\,dx
&+\iint_{(T_0,\tau)\times \Omega}\langle (\nabla F(\nabla u))_{x_j} , \nabla \left((h\,h')(u_{x_j})\, \chi\, \eta^2\right)\rangle\,dt\,dx\\
&= \frac{1}{2}\iint_{(T_0,\tau)\times \Omega} \chi'\, \eta^2\, h^2(u_{x_j})\,dt\,dx.
\end{split}
\end{equation}
Since $\chi$ does not depend on the spatial variable, we have
\[
\begin{split}
\langle (\nabla F(\nabla u))_{x_j} , \nabla \left((hh')(u_{x_j})\, \chi\, \eta^2\right)\rangle
&=\langle D^2F(\nabla u)\,\nabla h(u_{x_j}), \nabla h(u_{x_j})\rangle\, \chi\, \eta^2 \\
&+\langle D^2F(\nabla u)\,\nabla u_{x_j}, \nabla u_{x_j}\rangle\, h''(u_{x_j})\,h(u_{x_j})\,\chi\, \eta^2\\
&+2\,\langle D^2F(\nabla u)\,\nabla u_{x_j}, \nabla \eta\rangle\, (h\,h')(u_{x_j})\,\chi\, \eta.
\end{split}
\]
Since the second term is non-negative, by dropping it, we get from \eqref{eq1097}
\[
\begin{split}
\frac{\chi(\tau)}{2}\,\int_{\{\tau\}\times\Omega}h^2(u_{x_j})\,  \eta^2\,dx
&+\iint_{(T_0,\tau)\times \Omega}\langle D^2F(\nabla u)\,\nabla h(u_{x_j}), \nabla h(u_{x_j})\rangle\, \chi\, \eta^2\,dt\,dx\\
&\leq \frac{1}{2}\iint_{(T_0,\tau)\times \Omega} \chi' \,\eta^2\, h^2(u_{x_j})\,dt\,dx\\
&-2\iint_{(T_0,\tau)\times \Omega}\langle D^2F(\nabla u)\,\nabla u_{x_j}, \nabla \eta\rangle\, (h\,h')(u_{x_j})\,\chi\, \eta\,dt\,dx.
\end{split}
\]
In order to estimate the last term, we use the Cauchy-Schwarz inequality:
\[
\Big|\langle D^2F(\nabla u)\,\nabla u_{x_j} , \nabla \eta\rangle\Big| \leq \bigg(\langle D^2F(\nabla u)\,\nabla u_{x_j} , \nabla u_{x_j}\rangle  \bigg)^{\frac{1}{2}} \bigg( \langle D^2F(\nabla u)\,\nabla \eta , \nabla \eta\rangle\bigg)^{\frac{1}{2}}.
\]
A further application of Young inequality leads to
\[
\begin{split}
\Bigg|-2 \iint_{(T_0,\tau)\times \Omega}& \langle D^2F(\nabla u)\,\nabla u_{x_j} , \nabla \eta\rangle\,(h\,h')(u_{x_j})\,\chi\,\eta \,dt\,dx\Bigg|\\
&\leq \frac{1}{2}\, \iint_{(T_0,\tau)\times \Omega}\langle D^2F(\nabla u)\nabla u_{x_j},\nabla u_{x_j}\rangle\, h'(u_{x_j})^2\, \chi\,\eta^2\,dt\,dx\\
&+ 2\,  \iint_{(T_0,\tau)\times \Omega}\langle D^2F(\nabla u)\nabla \eta , \nabla \eta\rangle\, h^2(u_{x_j})\,\chi\,dt\,dx.
\end{split}
\]
In this way, the integral containing $\nabla u_{x_j}$ can be absorbed in the left hand side. 
Let us finally observe that we can remove the $C^2$ assumption on the function $h$, by a standard approximation argument.
\end{proof}

We next establish the key tool for the proof of our main result, namely  a Cacciopoli type inequality,  where two different partial derivatives \(u_{x_j}\) and \(u_{x_k}\) come into play.
 
\begin{lm}[Weird Caccioppoli inequality]
\label{lm:weird}
Let  $\eta\in C^{\infty}_0(\Omega)$ and $\chi \in C^{\infty}_0((T_0,T_1])$ be two non-negative functions, with $\chi$ non-decreasing. Let \(\Phi : \mathbb{R}^+\to \mathbb{R}\) and \(\Psi:\mathbb{R}^+\to \mathbb{R}\) be two \(C^1\) non-decreasing and non-negative convex functions. Then, for almost every $\tau\in I$, every $k, j=1,...,N$ and every $\theta\in[0,1]$, we have
\[
\begin{split}
\chi(\tau)&\int_{\{\tau\}\times\Omega}\Phi(u_{x_j}^2)\, \Psi(u_{x_k}^2)\, \eta^2\,dx
+\iint_{(T_0,\tau)\times \Omega} \langle D^2F(\nabla u)\nabla u_{x_j},\nabla u_{x_j}\rangle\,\Phi'(u_{x_j}^2)\,  \Psi(u_{x_k}^2)\,\chi\,\eta^2 \,dt\,dx \\
&\leq \iint_{(T_0,\tau)\times \Omega} \chi'\, \eta^2\, \Phi(u_{x_j}^2)\,\Psi(u_{x_k}^2)\,dt\,dx\\
&+4\iint_{(T_0,\tau)\times \Omega} \langle D^2F(\nabla u)\nabla \eta , \nabla \eta\rangle\,\left(u_{x_j}^2\,\Phi'(u_{x_j}^2) \,\Psi(u_{x_k}^2)+u_{x_k}^2\,\Psi'(u_{x_k}^2)\, \Phi(u_{x_j}^2)\right)\,\chi\,dt\,dx
\\
&+8\left(\iint_{(T_0,\tau)\times \Omega}\langle D^2F(\nabla u)\nabla u_{x_j} , \nabla u_{x_j}\rangle\, u_{x_j}^2\,\Phi'(u_{x_j}^2)^2\, \Psi'(u_{x_k}^2)^\theta\, \chi\,\eta^2\,dt\,dx\right)^{\frac{1}{2}}\\
 & \times\Bigg(\frac{1}{4}\iint_{(T_0,\tau)\times \Omega} \chi' \eta^2 |u_{x_k}|^{2\theta}\, \Psi(u_{x_k}^2)^{2-\theta}\,dt\,dx
\\
&+\iint_{(T_0,\tau)\times \Omega}\langle D^2F(\nabla u) \nabla \eta, \nabla \eta\rangle\, |u_{x_k}|^{2\theta}\, \Psi(u_{x_k}^2)^{2-\theta}\,\chi\,dt\,dx\Bigg)^{\frac{1}{2}}.
\end{split}
\]
\end{lm}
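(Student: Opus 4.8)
The plan is to adapt the scheme of Lemma~\ref{lm:standard}, the genuinely new ingredient being the \emph{symmetrization} of the roles of $x_j$ and $x_k$ announced in the introduction. As there, I first carry out the argument assuming in addition that $\Phi$ and $\Psi$ are of class $C^2$, recovering the general case by approximation at the end. Into equation \eqref{eq1043} written for the index $j$ I insert
\[
\varphi=(u_{x_j})_\sigma^+\,\Phi'\big(((u_{x_j})_\sigma^+)^2\big)\,\Psi\big(((u_{x_k})_\sigma^+)^2\big)\,\zeta\,\eta^2,
\]
and into \eqref{eq1043} written for the index $k$ the function $\widetilde\varphi=(u_{x_k})_\sigma^+\,\Psi'\big(((u_{x_k})_\sigma^+)^2\big)\,\Phi\big(((u_{x_j})_\sigma^+)^2\big)\,\zeta\,\eta^2$, where $\zeta\in C^\infty_0(I)$ is compactly supported away from the endpoints and $0<\sigma$ is small. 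By \eqref{eq327} and Lemma~\ref{lm-Siena}, the Steklov averages $(u_{x_j})_\sigma^+$ and $(u_{x_k})_\sigma^+$ lie in $W^{1,1}_{\rm loc}\cap L^\infty_{\rm loc}$ in time, so the chain rule applies and, upon adding the two equations, the two parabolic contributions combine into the single total derivative $\tfrac12\iint\partial_t\big[\Phi(((u_{x_j})_\sigma^+)^2)\,\Psi(((u_{x_k})_\sigma^+)^2)\big]\,\zeta\,\eta^2$. A single integration by parts in $t$ then shifts the time derivative onto $\zeta$, the factor $\eta^2$ being $t$-independent — this is exactly why the symmetrization is needed, since starting from only one equation the time derivative would also fall on the frozen factor $\Psi(u_{x_k}^2)$ (respectively $\Phi(u_{x_j}^2)$).

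Letting $\sigma\to0^+$ as in Lemma~\ref{lm:standard} (using that $(u_{x_j})_\sigma^+,(u_{x_k})_\sigma^+$ and their spatial gradients converge in $L^2_{\rm loc}$ while staying uniformly bounded, that $\Phi,\Psi,\Phi',\Psi'$ are continuous, and that $(\nabla F(\nabla u))_{x_j},(\nabla F(\nabla u))_{x_k}\in L^2_{\rm loc}$), then choosing $\zeta=\widetilde\chi_\delta\,\chi$ as in \eqref{eq_zeta} and letting $\delta\to0$ to produce the time slice at a.e. $\tau$, and finally writing $(\nabla F(\nabla u))_{x_i}=D^2F(\nabla u)\nabla u_{x_i}$ and expanding $\nabla\varphi,\nabla\widetilde\varphi$, I reach (after multiplying by $2$) an identity of the form
\[
\chi(\tau)\!\int_{\{\tau\}\times\Omega}\!\!\Phi(u_{x_j}^2)\Psi(u_{x_k}^2)\,\eta^2\,dx+2\!\iint (G+P)\,\chi = \iint\!\Phi(u_{x_j}^2)\Psi(u_{x_k}^2)\,\chi'\eta^2 - 2\!\iint (C+B_1+B_2)\,\chi ,
\]
where $G=\langle D^2F(\nabla u)\nabla u_{x_j},\nabla u_{x_j}\rangle\Phi'(u_{x_j}^2)\Psi(u_{x_k}^2)\eta^2$ is the term to be kept, $P\ge0$ collects all the terms carrying $\Phi''$, $\Psi'$ or $\Psi''$ (non-negative by the monotonicity and convexity of $\Phi,\Psi$ and the positivity of $D^2F$), $C=4\,u_{x_j}u_{x_k}\langle D^2F(\nabla u)\nabla u_{x_j},\nabla u_{x_k}\rangle\Phi'(u_{x_j}^2)\Psi'(u_{x_k}^2)\eta^2$ is the cross term, and $B_1,B_2$ are the two boundary terms involving $\nabla\eta$.

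The heart of the matter is the cross term. Cauchy--Schwarz for the positive semi-definite form $D^2F(\nabla u)$, then the splitting $\Psi'(u_{x_k}^2)=\Psi'(u_{x_k}^2)^{\theta/2}\,\Psi'(u_{x_k}^2)^{1-\theta/2}$ and Cauchy--Schwarz in $L^2((T_0,\tau)\times\Omega)$, give
\[
2\iint|C|\,\chi\le 8\Big(\iint\langle D^2F\nabla u_{x_j},\nabla u_{x_j}\rangle u_{x_j}^2\Phi'(u_{x_j}^2)^2\Psi'(u_{x_k}^2)^{\theta}\chi\eta^2\Big)^{\frac12}\Big(\iint\langle D^2F\nabla u_{x_k},\nabla u_{x_k}\rangle u_{x_k}^2\Psi'(u_{x_k}^2)^{2-\theta}\chi\eta^2\Big)^{\frac12}.
\]
The first factor is already the one in the statement; the second still carries the ``bad'' quantity $\langle D^2F\nabla u_{x_k},\nabla u_{x_k}\rangle$, and this is where I invoke the \emph{standard} Caccioppoli inequality \eqref{eq1218}, applied with the index $k$ and with the convex test function $h(s)=H(s^2)$, $H(r):=\tfrac12\int_0^r\Psi'(t)^{1-\theta/2}\,dt$. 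Since $\Psi'\ge0$ is non-decreasing and $1-\tfrac\theta2\ge\tfrac12$, this $h$ is $C^1$, convex and non-negative, and it is built so that $h'(s)^2=s^2\Psi'(s^2)^{2-\theta}$, i.e. $\langle D^2F\nabla h(u_{x_k}),\nabla h(u_{x_k})\rangle=\langle D^2F\nabla u_{x_k},\nabla u_{x_k}\rangle\,u_{x_k}^2\Psi'(u_{x_k}^2)^{2-\theta}$. Finally, Hölder's inequality with conjugate exponents $\tfrac1{1-\theta/2}$ and $\tfrac2\theta$, together with $\int_0^r\Psi'\le\Psi(r)$, yields the pointwise bound $H(r)^2\le\tfrac14\,r^{\theta}\,\Psi(r)^{2-\theta}$, hence $h^2(u_{x_k})\le\tfrac14|u_{x_k}|^{2\theta}\Psi(u_{x_k}^2)^{2-\theta}$; feeding this into \eqref{eq1218} turns the second factor above into exactly the last bracket of the statement.

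For the boundary terms $B_1,B_2$ I use Cauchy--Schwarz for $D^2F(\nabla u)$ followed by Young's inequality: the part of $B_1$ proportional to $\langle D^2F\nabla u_{x_j},\nabla u_{x_j}\rangle$ is absorbed by half of $2\iint G\,\chi$, the part of $B_2$ proportional to $\langle D^2F\nabla u_{x_k},\nabla u_{x_k}\rangle$ is absorbed by the corresponding non-negative summand of $2\iint P\,\chi$, and the remainder is bounded by $4\iint\langle D^2F\nabla\eta,\nabla\eta\rangle\big(u_{x_j}^2\Phi'(u_{x_j}^2)\Psi(u_{x_k}^2)+u_{x_k}^2\Psi'(u_{x_k}^2)\Phi(u_{x_j}^2)\big)\chi$, the second term of the statement. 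Plugging these estimates into the basic identity of the second paragraph and discarding on the left-hand side the remaining non-negative terms of $P$ (and the surviving half of $G$) gives the claimed inequality; the merely $C^1$ case of $\Phi,\Psi$ follows by approximation by $C^2$ non-decreasing convex functions and passage to the limit, all relevant quantities being bounded since $\nabla u\in L^\infty_{\rm loc}$. The main obstacles I expect are exactly Step~one and Step~three: without the symmetrization one cannot even reach the basic identity, and without the tailored choice of $h$ one is stuck with an uncontrolled $\langle D^2F\nabla u_{x_k},\nabla u_{x_k}\rangle$.
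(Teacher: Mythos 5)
Your proposal follows the paper's proof essentially verbatim: the same symmetrized pair of test functions, the same passages $\sigma\to 0^+$ and then $\delta\to 0$ with $\zeta=\widetilde\chi_\delta\,\chi$, the same splitting $\Psi'=\Psi'^{\theta/2}\Psi'^{1-\theta/2}$ in the cross term, and the same invocation of Lemma~\ref{lm:standard} with the auxiliary convex function (your $h$ is the paper's $G$ up to an absorbed factor $\tfrac12$, and your H\"older bound $H(r)^2\le\tfrac14\,r^\theta\Psi(r)^{2-\theta}$ is the paper's Jensen bound $G(t)\le|t|^\theta\Psi(t^2)^{1-\theta/2}$). The boundary term absorption and final bookkeeping also match. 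Correct proof, same approach.
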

\begin{proof}
It is convenient to divide the proof into two steps.
\vskip.2cm\noindent
{\bf Step 1: an identity involving \(u_{x_j}\) and \(u_{x_k}\)}. 
We first assume that $\Phi$ and $\Psi$ are two \(C^2\) non-decreasing and non-negative convex functions. We fix $k,j\in\{1,\dots,N\}$. Given $0<\sigma_1<(T_1-T_0)/2$  and $\zeta \in C^{\infty}_0(T_0+\sigma_1,T_1-\sigma_1)$, we consider \eqref{eq1043} with the index $j$ and for every \(0<\sigma<\sigma_1\), we insert the test function
\[
\varphi =\left(u_{\sigma}^+\right)_{x_j} \Phi'\left(\left(\left(u_{\sigma}^+\right)_{x_j}\right)^2\right)
\Psi\left(\left(\left(u_{\sigma}^+\right)_{x_k}\right)^2\right)\zeta \eta^2.
\]
Symmetrically, we consider \eqref{eq1043} with the index $k$ and insert the test function
\[
\widetilde{\varphi} =
\left(u_{\sigma}^+\right)_{x_k} \Psi'\left(\left(\left(u_{\sigma}^+\right)_{x_k}\right)^2\right)
\Phi\left(\left(\left(u_{\sigma}^+\right)_{x_j}\right)^2\right)\,\zeta\, \eta^2.
\]
The functions \(\varphi \) and \(\widetilde{\varphi}\) are compactly supported in \((T_0+\sigma_1,T_1-\sigma_1)\times\Omega\) and belong to \(L^{\infty}((T_0+\sigma_1,T_1-\sigma_1)\times\Omega)\cap L^{2}((T_0+\sigma_1,T_1-\sigma_1);W^{1,2}(\Omega))\). Thus they are admissible test functions.
We observe that
\[
\left((u_{\sigma}^+)_{x_j}\right)_{t}\, \varphi  
=
\frac{1}{2}\, \left(\Phi\left(\left(\left(u_{\sigma}^+\right)_{x_j}\right)^2\right) \right)_t\,\Psi\left(\left(\left(u_{\sigma}^+\right)_{x_k}\right)^2\right)\,\zeta\, \eta^2,
\]
and similarly
\[
\left((u_{\sigma}^+)_{x_k}\right)_{t}\, \widetilde{\varphi}  
=
\frac{1}{2} \,\left(\Psi\left(\left(\left(u_{\sigma}^+\right)_{x_k}\right)^2\right) \right)_t\,\Phi\left(\left(\left(u_{\sigma}^+\right)_{x_j}\right)^2\right)\,\zeta\, \eta^2.
\]
Thus we obtain
\begin{equation}
\label{doppietta}
\left((u_{\sigma}^+)_{x_j}\right)_{t} \varphi +\left((u_{\sigma}^+)_{x_k}\right)_{t} \widetilde{\varphi}  
=\frac{1}{2}\left[ \Phi\left(\left(\left(u_{\sigma}^+\right)_{x_j}\right)^2\right) \Psi\left(\left(\left(u_{\sigma}^+\right)_{x_k}\right)^2\right)\right]_t\,\zeta\, \eta^2.
\end{equation}
By summing the two equations obtained from \eqref{eq1043} as described above, and using the identity \eqref{doppietta}, we get
\[
\begin{split}
\frac{1}{2}\,\iint_{(T_0+\sigma_1,T_1-\sigma_1)\times\Omega}\Bigg[ \Phi\left(\left(\left(u_{\sigma}^+\right)_{x_j}\right)^2\right)& \Psi\left(\left(\left(u_{\sigma}^+\right)_{x_k}\right)^2\right)\Bigg]_t\,\zeta\, \eta^2\,dt\,dx\\
&+\iint_{(T_0+\sigma_1,T_1-\sigma_1)\times\Omega}\langle \left((\nabla F(\nabla u))_{x_j}\right)_{\sigma}^+ , \nabla \varphi\rangle\,dt\,dx\\
&+\iint_{(T_0+\sigma_1,T_1-\sigma_1)\times\Omega}\langle \left((\nabla F(\nabla u))_{x_k}\right)_{\sigma}^{+} , \nabla \widetilde{\varphi}\rangle\,dt\,dx=0.
\end{split}
\]
We then perform an integration by parts in the time variable in the first integral, which gives
\begin{multline*}
-\frac{1}{2}\iint_{(T_0+\sigma_1,T_1-\sigma_1)\times\Omega}\Phi\left(\left(\left(u_{\sigma}^+\right)_{x_j}\right)^2\right) \Psi\left(\left(\left(u_{\sigma}^+\right)_{x_k}\right)^2\right)\zeta' \eta^2\,dt\,dx \\
+\iint_{(T_0+\sigma_1,T_1-\sigma_1)\times\Omega}\langle \left((\nabla F(\nabla u))_{x_j}\right)_{\sigma}^+ , \nabla \varphi\rangle\,dt\,dx
+\iint_{(T_0+\sigma_1,T_1-\sigma_1)\times\Omega}\langle \left((\nabla F(\nabla u))_{x_k}\right)_{\sigma}^+ , \nabla \widetilde{\varphi}\rangle\,dt\,dx=0.
\end{multline*}
Finally, we let $\sigma$ go to $0$. In the same vein as in the proof of \eqref{eq110511}, the Dominated Convergence Theorem implies that
\begin{equation}
\begin{split}
-\frac{1}{2}&\iint_{I\times\Omega}\Phi\left(\left(u_{x_j}\right)^2\right) \Psi\left(\left(u_{x_k}\right)^2\right)\zeta' \eta^2\,dt\,dx \\
&+\iint_{I\times\Omega}\langle (\nabla F(\nabla u))_{x_j} , \nabla \varphi_0\rangle\,dt\,dx
+\iint_{I\times\Omega}\langle (\nabla F(\nabla u))_{x_k} , \nabla \widetilde{\varphi}_0\rangle\,dt\,dx=0, \label{eq1105}
\end{split}
\end{equation}
where
\[
\varphi_0 =u_{x_j} \Phi'\left(u_{x_j}^2\right)
\Psi\left(u_{x_k}^2\right)\zeta \eta^2,
\]
and 
\[
\widetilde{\varphi}_0 =
u_{x_k} \Psi'\left(u_{x_k}^2\right)
\Phi\left(u_{x_j}^2\right)\zeta \eta^2.
\]
We now choose \(\zeta\) as in \eqref{eq_zeta} and  insert it in \eqref{eq1105}. By letting \(\delta\) go to \(0\), we obtain for almost every \(\tau\in I\)
\begin{equation}
\begin{split}
\frac{\chi(\tau)}{2}&\int_{\{\tau\}\times\Omega}\Phi\left(u_{x_j}^2\right) \Psi\left(u_{x_k}^2\right) \eta^2\,dt\,dx \\
&+\iint_{(T_0,\tau)\times \Omega}\langle (\nabla F(\nabla u))_{x_j} , \nabla \psi_0\rangle\,dt\,dx
+\iint_{(T_0,\tau)\times \Omega}\langle (\nabla F(\nabla u))_{x_k} , \nabla \widetilde{\psi}_0\rangle\,dt\,dx\\
&= \frac{1}{2}\iint_{(T_0,\tau)\times \Omega} \chi' \eta^2 \Phi(u_{x_j}^2)\Psi(u_{x_k}^2)\,dt\,dx, \label{eq-1181}
\end{split}
\end{equation}
where \(\psi_0\) and \(\widetilde{\psi}_0\) are defined as \(\varphi_0\) and \(\widetilde{\varphi}_0\),  except that \(\zeta\) is now replaced by \(\chi\).
\vskip.2cm\noindent
{\bf Step 2: completion of the proof}.
We first observe that
\[
\langle (\nabla F(\nabla u))_{x_j} , \nabla \psi_0\rangle =\langle D^2F(\nabla u)\,\nabla u_{x_j}, \nabla \psi_0\rangle, \qquad \langle (\nabla F(\nabla u))_{x_k} , \nabla \widetilde{\psi}_0\rangle= \langle D^2F(\nabla u)
 \nabla u_{x_k}, \nabla \widetilde{\psi}_0\rangle.
\] 
Taking into account the definition of \(\psi_0\) and \(\widetilde{\psi}_0\), we thus get from \eqref{eq-1181}
\begin{equation}
\begin{split}
\frac{\chi(\tau)}{2}&\int_{\{\tau\}\times\Omega}\Phi\left(u_{x_j}^2\right) \Psi\left(u_{x_k}^2\right) \eta^2\,dx\\
&+\iint_{(T_0,\tau)\times \Omega}  \langle D^2F(\nabla u)\nabla u_{x_j} , \nabla u_{x_j}\rangle\left(\Phi'(u_{x_j}^2)+ 2 u_{x_j}^2\Phi''(u_{x_j}^2)\right) \Psi(u_{x_k}^2)\,\chi\,\eta^2 \,dt\,dx\\
&+\iint_{(T_0,\tau)\times \Omega}  \langle D^2F(\nabla u)\,\nabla u_{x_k}, \nabla u_{x_k}\rangle\left(\Psi'(u_{x_k}^2) + 2 u_{x_k}^2\Psi''(u_{x_k}^2)\right) \Phi(u_{x_j}^2)\,\chi\,\eta^2 \,dt\,dx
\\
&= \frac{1}{2}\iint_{(T_0,\tau)\times \Omega} \chi'\, \eta^2\, \Phi(u_{x_j}^2)\,\Psi(u_{x_k}^2)\,dt\,dx\\
&-4\, \iint_{(T_0,\tau)\times \Omega} \langle D^2F(\nabla u)\,\nabla u_{x_j} , \nabla u_{x_k}\rangle \,u_{x_j}\,\Phi'(u_{x_j}^2) \,u_{x_k}\,\Psi'(u_{x_k}^2)\,\chi\,\eta^2\,dt\,dx
\\
&-2 \iint_{(T_0,\tau)\times \Omega} \langle D^2F(\nabla u)\,\nabla u_{x_j} , \nabla \eta\rangle\, u_{x_j}\,\Phi'(u_{x_j}^2) \,\Psi(u_{x_k}^2)\,\chi\,\eta \,dt\,dx\\
&-2 \iint_{(T_0,\tau)\times \Omega} \langle D^2F(\nabla u)\,\nabla u_{x_k}, \nabla \eta\rangle\,u_{x_k}\,\Phi(u_{x_j}^2)\, \Psi'(u_{x_k}^2)\,\chi\,\eta \,dt\,dx.
\label{eq11534}
\end{split}
\end{equation}
We first estimate the two last terms. 
We use the Cauchy-Schwarz inequality:
\[
|\langle D^2F(\nabla u)\,\nabla u_{x_j} , \nabla \eta\rangle| \leq \bigg( \langle D^2F(\nabla u)\nabla u_{x_j} , \nabla u_{x_j}\rangle \bigg)^{\frac{1}{2}} \bigg(\langle D^2F(\nabla u)\nabla \eta , \nabla \eta\rangle\bigg)^{\frac{1}{2}}.
\]
By the Young inequality, this implies
\[
\begin{split}
\Bigg|-2 \iint_{(T_0,\tau)\times \Omega} \langle D^2F(\nabla u)\,\nabla u_{x_j} , \nabla \eta\rangle u_{x_j}\Phi'(u_{x_j}^2) \Psi(u_{x_k}^2)\,\chi\,\eta \,dt\,dx\Bigg|\\
\leq \frac{1}{2} \iint_{(T_0,\tau)\times \Omega}\langle D^2F(\nabla u)\nabla u_{x_j} , \nabla u_{x_j}\rangle\Phi'(u_{x_j}^2) \Psi(u_{x_k}^2)\,\chi\,\eta^2\,dt\,dx\\
+ 2 \, \iint_{(T_0,\tau)\times \Omega}\langle D^2F(\nabla u)\nabla \eta , \nabla \eta\rangle\,u_{x_j}^2\, \Phi'(u_{x_j}^2) \Psi(u_{x_k}^2)\,\chi\,dt\,dx.
\end{split}
\]
A similar inequality holds true for the last term in \eqref{eq11534}. Hence, 
\begin{equation}
\label{eq1153-bis}
\begin{split}
\frac{\chi(\tau)}{2}&\int_{\{\tau\}\times\Omega}\Phi\left(u_{x_j}^2\right) \Psi\left(u_{x_k}^2\right) \eta^2\,dx\\
&+\iint_{(T_0,\tau)\times \Omega}  \langle D^2F(\nabla u)\nabla u_{x_j} , \nabla u_{x_j}\rangle\left(\frac{1}{2}\Phi'(u_{x_j}^2) + 2\, u_{x_j}^2\,\Phi''(u_{x_j}^2)\right)\, \Psi(u_{x_k}^2)\,\chi\,\eta^2 \,dt\,dx\\
&+\iint_{(T_0,\tau)\times \Omega}  \langle D^2F(\nabla u)\,\nabla u_{x_k}, \nabla u_{x_k}\rangle\left(\frac{1}{2}\Psi'(u_{x_k}^2) + 2\, u_{x_k}^2\,\Psi''(u_{x_k}^2)\right)\, \Phi(u_{x_j}^2)\,\chi\,\eta^2 \,dt\,dx
\\
&\le \frac{1}{2}\iint_{(T_0,\tau)\times \Omega} \chi'\, \eta^2\, \Phi(u_{x_j}^2)\Psi(u_{x_k}^2)\,dt\,dx\\
&-4 \iint_{(T_0,\tau)\times \Omega} \langle D^2F(\nabla u)\,\nabla u_{x_j} , \nabla u_{x_k}\rangle u_{x_j}\,\Phi'(u_{x_j}^2)\, u_{x_k}\Psi'(u_{x_k}^2)\,\chi\,\eta^2\,dt\,dx
\\
&+2 \iint_{(T_0,\tau)\times \Omega} \langle D^2F(\nabla u)\nabla \eta , \nabla \eta\rangle\,\left(u_{x_j}^2\,\Phi'(u_{x_j}^2) \,\Psi(u_{x_k}^2)+u_{x_k}^2\Psi'(u_{x_k}^2) \Phi(u_{x_j}^2)\right)\chi\,dt\,dx.
\end{split}
\end{equation}
In the left-hand side of \eqref{eq1153-bis}, in the second term, we drop \(2\,u_{x_j}^2\,\Phi''(u_{x_j}^2)\) which is non-negative. We also drop the whole last term of the left-hand side for the same reason. 
This yields
\begin{equation}
\begin{split}
\frac{\chi(\tau)}{2}&
\int_{\{\tau\}\times\Omega}\Phi\left(u_{x_j}^2\right) \Psi\left(u_{x_k}^2\right) \eta^2\,dx\\
& +\frac{1}{2}\iint_{(T_0,\tau)\times \Omega} \langle D^2F(\nabla u)\nabla u_{x_j} , \nabla u_{x_j}\rangle \Phi'(u_{x_j}^2)  \Psi(u_{x_k}^2)\chi\eta^2 \,dt\,dx\\
&\leq \frac{1}{2}\iint_{(T_0,\tau)\times \Omega} \chi' \eta^2 \Phi(u_{x_j}^2)\Psi(u_{x_k}^2)\,dt\,dx\\
&-4\, \iint_{(T_0,\tau)\times \Omega} \langle D^2F(\nabla u)\,\nabla u_{x_j} , \nabla u_{x_k}\rangle\, u_{x_j}\,\Phi'(u_{x_j}^2) \,u_{x_k}\,\Psi'(u_{x_k}^2)\,\chi\,\eta^2\,dt\,dx
\\
&+2 \iint_{(T_0,\tau)\times \Omega} \langle D^2F(\nabla u)\nabla \eta , \nabla \eta\rangle \left(u_{x_j}^2\Phi'(u_{x_j}^2) \Psi(u_{x_k}^2)+u_{x_k}^2\Psi'(u_{x_k}^2) \Phi(u_{x_j}^2)\right)\chi\,dt\,dx.
\label{eq1178}
\end{split}
\end{equation}
We next estimate the second term of the right-hand side of \eqref{eq1178}, that we denote by 
\[
\mathcal{A}= \iint_{(T_0,\tau)\times \Omega} \langle D^2F(\nabla u)\,\nabla u_{x_j} , \nabla u_{x_k}\rangle\, u_{x_j}\,\Phi'(u_{x_j}^2) \,u_{x_k}\,\Psi'(u_{x_k}^2)\,\chi\,\eta^2\,dt\,dx.
\]
We first use the Cauchy-Schwarz inequality to get
\[
|\langle D^2F(\nabla u)\,\nabla u_{x_j} , \nabla u_{x_k}\rangle|  \leq 
\bigg( \langle D^2F(\nabla u)\nabla u_{x_j} , \nabla u_{x_j}\rangle \bigg)^{\frac{1}{2}} \bigg( \langle D^2F(\nabla u)\,\nabla u_{x_k}, \nabla u_{x_k}\rangle\bigg)^{\frac{1}{2}}.
\]
We then introduce a parameter \(\theta\in [0,1]\). By writing \(\Psi'(u_{x_k}^2) = \Psi'(u_{x_k}^2)^{\frac{\theta}{2}}\,\Psi'(u_{x_k}^2)^{1-\frac{\theta}{2}}\), one gets by the Cauchy-Schwarz inequality again:
\begin{multline}\label{eq157}
\mathcal{A} \leq 
\left(\iint_{(T_0,\tau)\times \Omega}\langle D^2F(\nabla u)\nabla u_{x_j} , \nabla u_{x_j}\rangle\, u_{x_j}^2\,\Phi'(u_{x_j}^2)^2 \,\Psi'(u_{x_k}^2)^\theta\, \chi\,\eta^2\,dt\,dx\right)^{\frac{1}{2}}\\
\left(\iint_{(T_0,\tau)\times \Omega} \langle D^2F(\nabla u)\,\nabla u_{x_k}, \nabla u_{x_k}\rangle\, \Psi'(u_{x_k}^2)^{2-\theta}\,u_{x_k}^2\,\chi\,\eta^2\,dt\,dx\right)^{\frac{1}{2}}.
\end{multline}
We define
\[
G(t)=\int_{0}^{t^2}\Psi'(s)^{1-\frac{\theta}{2}}\,ds.
\]
Then \(G\) is a \(C^1\) non-negative convex function. Moreover, by its definition
\[
\nabla (G\circ u_{x_k}) = 2\, u_{x_k}\, \Psi'(u_{x_k}^2)^{1-\frac{\theta}{2}}\,\nabla u_{x_k}.
\]
Hence, by  the standard Caccioppoli inequality \eqref{eq1218} with \(h=G\) and \(k\) in place of \(j\), this yields:
\[
\begin{split}
\iint_{(T_0,\tau)\times \Omega} &\langle D^2F(\nabla u)\,\nabla u_{x_k}, \nabla u_{x_k}\rangle \,\Psi'(u_{x_k}^2)^{2-\theta}u_{x_k}^2\,\chi\,\eta^2\,dt\,dx\\
&= \frac{1}{4}\iint_{(T_0,\tau)\times \Omega}\langle D^2F(\nabla u)\nabla G(u_{x_k}) , \nabla G(u_{x_k})\rangle\,\chi\,\eta^2\,dt\,dx\\
&\leq \frac{1}{4}\iint_{(T_0,\tau)\times \Omega} \chi'\, \eta^2\, G^2(u_{x_k})\,dt\,dx
+\iint_{(T_0,\tau)\times \Omega}\langle D^2F(\nabla u)\nabla \eta , \nabla \eta\rangle\, G^2(u_{x_k})\,\chi\,dt\,dx.
\end{split}
\]
By using the Jensen inequality with the concave function $y\mapsto y^{1-\theta/2}$, we obtain
\[
0\leq G(u_{x_k}) \leq |u_{x_k}|^{\theta}\, \Psi(u_{x_k}^2)^{1-\frac{\theta}{2}}.
\]
This implies
\[
\begin{split}
\iint_{(T_0,\tau)\times \Omega} &\langle D^2F(\nabla u)\,\nabla u_{x_k}, \nabla u_{x_k}\rangle \Psi'(u_{x_k}^2)^{2-\theta}\,u_{x_k}^2\,\chi\,\eta^2\,dt\,dx\\
&\leq \frac{1}{4}\iint_{(T_0,\tau)\times \Omega} \chi'\, \eta^2\, |u_{x_k}|^{2\theta}\, \Psi(u_{x_k}^2)^{2-\theta}\,dt\,dx \\
&+ \iint_{(T_0,\tau)\times \Omega} \langle D^2F(\nabla u) \nabla \eta, \nabla \eta\rangle\, |u_{x_k}|^{2\theta}\, \Psi(u_{x_k}^2)^{2-\theta}\,\chi\,dt\,dx.
\end{split}
\]
Coming back to \eqref{eq157}, it follows that
\[
\begin{split}
\mathcal{A} &\leq 
\left(\iint_{(T_0,\tau)\times \Omega} \langle D^2F(\nabla u)\nabla u_{x_j} , \nabla u_{x_j}\rangle\, u_{x_j}^2\,\Phi'(u_{x_j}^2)^2\, \Psi'(u_{x_k}^2)^\theta\, \chi\,\eta^2\,dt\,dx\right)^{\frac{1}{2}}\\
&\times\left(\frac{1}{4}\iint_{(T_0,\tau)\times \Omega} \chi'\, \eta^2\, |u_{x_k}|^{2\theta}\, \Psi(u_{x_k}^2)^{2-\theta}\,dt\,dx\right.\\
&\left.+\iint_{(T_0,\tau)\times \Omega} \langle D^2F(\nabla u) \nabla \eta, \nabla \eta\rangle\, |u_{x_k}|^{2\theta}\, \Psi(u_{x_k}^2)^{2-\theta}\,\chi\,dt\,dx\right)^{\frac{1}{2}}.
\end{split}
\]
Together with \eqref{eq1178}, this yields the desired inequality.
Finally, the $C^2$ assumption on $\Phi$ and $\Psi$ can be removed by a standard approximation argument.
\end{proof}

\section{Uniform Lipschitz estimate for the regularized equation} 
\label{sec:4}

This section is devoted to the proof of the following uniform estimate. For simplicity, we will work with anisotropic parabolic cubes of the form
\[
Q_R(t_0,x_0)=(t_0-R^p,t_0)\times (x_0-R,x_0+R)^N.
\]
\begin{prop}\label{lm-apriori-estimate}
There exist $\alpha=\alpha(N)>2$ and \(C=C(N,p)>0\) such that for every \(\varepsilon>0\) and for every \(Q_{r}(x_0,t_0) \subset Q_R(t_0,x_0)\Subset I\times\Omega\) with \(R\leq 1\), one has 
\begin{equation}
\label{uepsbound}
\|\nabla u_\varepsilon\|_{L^{\infty}(Q_{r}(x_0,t_0))} \leq \frac{C}{(R-r)^{\alpha\,p}}\,\left[\left(\iint_{Q_R(t_0,x_0)} |\nabla u_{\varepsilon}|^p\,dt\,dx\right)^\frac{1}{2}+1\right].
\end{equation}
\end{prop}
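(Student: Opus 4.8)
The plan is to run a Moser iteration on powers of $|\nabla u_\varepsilon|$, using the weird Caccioppoli inequality of Lemma~\ref{lm:weird} to overcome the degeneracy of $D^2F_\varepsilon$ on the coordinate hyperplanes. Throughout we drop the index $\varepsilon$, keeping in mind that $F=F_\varepsilon$ is uniformly convex (so all manipulations are justified by \eqref{eq327}), but that the constants must not depend on $\varepsilon$: hence we may only use the lower bound $\langle D^2F(z)\,\xi,\xi\rangle \gtrsim \sum_i |z_i|^{p-2}|\xi|^2$ coming from the orthotropic part, together with the growth $\langle D^2F(z)\,\xi,\xi\rangle \lesssim (1+|z|)^{p-2}|\xi|^2$ (with an $\varepsilon$-independent constant since $\varepsilon<1$). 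First I would fix the cube and a decreasing family of radii $r \le r' < r'' \le R$, and choose cut-offs $\eta\in C_0^\infty$, $\chi\in C_0^\infty((T_0,T_1])$ adapted to the corresponding shrinking parabolic cubes, with $|\nabla\eta|\lesssim (r''-r')^{-1}$ and $0\le\chi'\lesssim (r''-r')^{-p}$ (anisotropic scaling).

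The core step is to convert Lemma~\ref{lm:weird} into a reverse H\"older inequality. Apply it with $\Phi(s)=\Psi(s)=(s+\delta)^{q/2}$ (or a truncated/regularized version, $\delta>0$, to be removed at the end), for a large exponent $q$, and sum over all pairs $j,k$. On the left-hand side one keeps the time-slice term $\sup_\tau \int \Phi(u_{x_j}^2)\Psi(u_{x_k}^2)\eta^2\,dx$ and the ``energy'' term $\iint \langle D^2F\,\nabla u_{x_j},\nabla u_{x_j}\rangle \Phi'(u_{x_j}^2)\Psi(u_{x_k}^2)\chi\eta^2$. Using the orthotropic lower bound on $D^2F$, the energy term dominates $\iint |u_{x_j}|^{p-2}|\nabla u_{x_j}|^2 (u_{x_j}^2+\delta)^{q/2-1}(u_{x_k}^2+\delta)^{q/2}\chi\eta^2$, which after summing in $j,k$ and the usual chain-rule bookkeeping controls $\iint \chi\eta^2\,|\nabla (|\nabla u|^{\text{(power)}})|^2$ for a suitable power; more precisely, grouping the factor $|u_{x_j}|^{p-2}$ with $(u_{x_j}^2+\delta)^{q/2-1}$ produces a gradient of $w^{(q+p)/2}$-type quantities where $w=|\nabla u|$. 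The right-hand side of Lemma~\ref{lm:weird} has three blocks: the $\chi'$-term, the $\langle D^2F\,\nabla\eta,\nabla\eta\rangle$-term (bounded using the \emph{upper} growth $(1+|\nabla u|)^{p-2}|\nabla\eta|^2$, so costing $(r''-r')^{-2}$ and an extra power $|\nabla u|^{p-2}$, i.e.\ $q\mapsto q+p$ in the right exponents — this is where the $1$ and the $R\le1$ hypothesis enter to absorb lower-order terms), and the ``mixed'' block, which is the genuinely new term. For the mixed block I would \emph{choose $\theta$ so that the term closes}: the factor $\Phi'(u_{x_j}^2)^2 u_{x_j}^2 \Psi'(u_{x_k}^2)^\theta$ inside the first square root must be reabsorbed into the left-hand energy term (after a Young inequality splitting the product of the two square roots), which forces a relation between $\theta$, $q$ and $p$; the second square root is then already of the form ``$\chi'$ plus $D^2F\,\nabla\eta$'' applied to $|u_{x_k}|^{2\theta}\Psi(u_{x_k}^2)^{2-\theta}$, i.e.\ a pure $L^{(\text{something})q}$ norm of $w$ on the larger cube. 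Collecting everything, one obtains for $w=(|\nabla u|^2+\delta)^{1/2}$ an estimate of the schematic form
\[
\sup_\tau \int w^{q}\,\eta^2\,dx \;+\; \iint \big|\nabla\big(w^{(q+p)/2}\big)\big|^2\,\chi\,\eta^2\,dt\,dx \;\le\; \frac{C}{(r''-r')^{p}}\,(q+p)^{C}\iint_{Q_{r''}} w^{\,q+p}\,dt\,dx .
\]

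Once this single-step energy inequality is in hand, the rest is the standard parabolic Moser machinery, which I would carry out as in \cite[Chapter~VIII]{Dib} or as in our elliptic companion paper. Combining the $L^\infty_tL^2_x$ bound on $w^{(q+p)/2}$ with the $L^2_tW^{1,2}_x$ bound and the parabolic Sobolev embedding (which gives a gain factor $\tfrac{2(N+2)}{N}=:\tfrac{\kappa}{}$ with $\kappa>2$, hence the exponent $\alpha=\alpha(N)>2$ in the statement), one gets a reverse H\"older inequality
\[
\Big(\fint_{Q_{r'}} w^{\,\kappa(q+p)/2}\Big)^{1/\kappa} \;\le\; \Big(\frac{C\,(q+p)^{C}}{(r''-r')^{p}}\Big)^{1/(q+p)}\Big(\fint_{Q_{r''}} w^{\,q+p}\Big)^{1/(q+p)} \quad(\text{times an } R\text{-power from normalizing } \fint).
\]
Iterating this over the sequence of exponents $q_n+p = (q_0+p)\kappa^{\,n}$ starting from $q_0+p=p$ (so the first term on the right is the given $\iint_{Q_R}|\nabla u|^p$), and over a geometric sequence of radii $r_n \downarrow r$ with $r_n - r_{n+1}\sim 2^{-n}(R-r)$, the product of constants converges (the $\sum n\kappa^{-n}$ and $\sum \kappa^{-n}$ series), and one reaches
\[
\|w\|_{L^\infty(Q_r)} \le \frac{C}{(R-r)^{\alpha p}}\Big[\Big(\iint_{Q_R} w^{p}\Big)^{1/2}+1\Big];
\]
letting $\delta\to0$ gives \eqref{uepsbound}. (The additive $+1$ and the power of $R^p$ track the $w^{q+p}$-vs-$w^q$ mismatch and the $R\le 1$ normalization; one handles it either by Young's inequality at each step, trading a small power of $\|w\|_{L^\infty}$, absorbed at the end, or by the standard interpolation-with-$\varepsilon$ trick on the iteration.)

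\textbf{Main obstacle.} The delicate point is the mixed block of Lemma~\ref{lm:weird}: one must pick $\theta=\theta(q,p)\in[0,1]$ and split the product of the two square roots by Young's inequality so that the piece involving $\Phi'(u_{x_j}^2)^2 u_{x_j}^2\Psi'(u_{x_k}^2)^\theta\langle D^2F\,\nabla u_{x_j},\nabla u_{x_j}\rangle$ is genuinely reabsorbed into the left-hand energy term uniformly in $\varepsilon$ — this requires that the weight $\Phi'(u_{x_j}^2)^2 u_{x_j}^2 = (q/2)^2 u_{x_j}^2 (u_{x_j}^2+\delta)^{q-2}$ be dominated by $\Phi'(u_{x_j}^2)\Psi(u_{x_k}^2)$ times a controllable factor, i.e.\ one needs the orthotropic degenerate weight $|u_{x_j}|^{p-2}$ to compensate the extra $(u_{x_j}^2+\delta)^{q/2-1}$, exactly the ``absorption of degeneracy'' that fails for a single component but succeeds once $j$ and $k$ are mixed and one sums over all pairs. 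Keeping every constant independent of $\varepsilon$ while tracking the polynomial-in-$q$ losses (needed for convergence of the Moser product) is the bookkeeping heart of the argument; everything else is routine.
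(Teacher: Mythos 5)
Your proposal correctly identifies the overall architecture (weird Caccioppoli $\Rightarrow$ reverse H\"older $\Rightarrow$ Moser iteration) but misses the key mechanism that makes the weird Caccioppoli inequality close, and the specific choice you propose does not work.

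You apply Lemma~\ref{lm:weird} with $\Phi=\Psi=(s+\delta)^{q/2}$, i.e.\ a \emph{single} exponent, and hope to absorb the mixed block ``once $j$ and $k$ are mixed and one sums over all pairs.'' That cannot close. With $\Phi=\Psi$ the left-hand energy term carries the weight $|u_{x_j}|^{q-2}|u_{x_k}|^{q}$, whereas after Young the problematic piece carries $|u_{x_j}|^{2q-2}\Psi'(u_{x_k}^2)^\theta$. There is no choice of $\theta\in[0,1]$ and no summation over pairs that makes the latter pointwise dominated by the former: the power of $u_{x_j}$ on the right is strictly higher than on the left, and on the set where $|u_{x_j}|\gg |u_{x_k}|$ absorption fails. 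This is precisely the obstruction the orthotropic degeneracy creates, and it is not repaired by symmetrizing in $(j,k)$.

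The paper's resolution is different and essential: one takes $\Phi(t)=t^{s}$, $\Psi(t)=t^{m}$ with $1\le s\le m$ and $s+m=q+1$ fixed, and chooses $\theta=\frac{m-s}{m-1}$ (not to force absorption, but to force the Young-split term to have weight $|u_{x_j}|^{4s-2}|u_{x_k}|^{2(m-s)}$). Setting $s'=2s$, $m'=m-s$, this is exactly the left-hand energy weight for the \emph{next} pair $(s',m')$ with the same sum $s'+m'=q+1$. Running $\ell\mapsto(s_\ell,m_\ell)=(2^{\ell},\,q+1-2^{\ell})$ produces a staircase: the dangerous term at step $\ell$ cancels the energy term at step $\ell+1$, and summing from $\ell=0$ to $\ell_0-1$ (with $q=2^{\ell_0}-1$) telescopes everything down to a single surviving energy term $\langle D^2F\nabla u_{x_j},\nabla u_{x_j}\rangle |u_{x_k}|^{2q}$ on the left (the $\ell=0$ term, with \emph{no} weight in $u_{x_j}$, crucial for the subsequent chain rule in the $x_k$ variable) plus a leftover $\langle D^2F\nabla u_{x_j},\nabla u_{x_j}\rangle|u_{x_j}|^{2q}$ on the right, which is controlled by the \emph{standard} Caccioppoli inequality of Lemma~\ref{lm:standard} with $h(t)=|t|^{q+1}/(q+1)$. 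This combination of staircase plus standard Caccioppoli is the heart of the proof and is absent from your proposal. (Two smaller inaccuracies: the gradient quantity one controls after using the orthotropic lower bound is $\nabla\bigl(|u_{x_k}|^{q+\frac{p-2}{2}}u_{x_k}\bigr)$, a power of a \emph{single} component, not of $|\nabla u|$; and the Moser exponents come from the explicit interpolation identity $\frac{1}{\gamma_j}=\frac{\tau_j}{\gamma_{j-1}}+\frac{1-\tau_j}{\delta_j}$ with $\gamma_j=2^{j+2}-2+p$, not from a free gain factor $\kappa$.)
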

\begin{proof} 
We will limit ourselves for simplicity to the case \(N\geq 3\). This allows to use the Sobolev inequality valid for every $f\in W^{1,2}_0(\Omega)$
\[
\|f\|_{L^{2^*}(\Omega)}\leq C_N\, \|\nabla f\|_{L^{2}(\Omega)},\qquad \mbox{ with }2^{*}=\frac{2\,N}{N-2}.
\] 
Here \(C_N\) is a constant which depends only on \(N\). The case \(N=2\) follows with minor modifications and we omit the details.
\par
The proof is quite involved and for ease of readability, we divide it into several steps.
\vskip.2cm\noindent
{\bf Step 1: the choices of \(\Phi\) and \(\Psi\)}. 
We apply Lemma \ref{lm:weird} with the following choices
\[
\Phi(t)=t^{s} \qquad \textrm{ and } \qquad \Psi(t)=t^m, \qquad \textrm{ for } t\geq 0,
\]
with \(1\leq s \leq m\). We also take
\[
\theta = \begin{cases}
\dfrac{m-s}{m-1} \in [0,1] &\textrm{ if } m>1,\\
1 & \textrm{ if } m=1.
\end{cases}
\]
This gives
\[
\begin{split}
&\chi(\tau)
\int_{\{\tau\}\times\Omega}|u_{x_j}|^{2s}\, |u_{x_k}|^{2m}\, \eta^2\,dx 
\\
&+s\iint_{(T_0,\tau)\times \Omega} \langle D^2F(\nabla u)\nabla u_{x_j} , \nabla u_{x_j}\rangle\, |u_{x_j}|^{2s-2}\,  |u_{x_k}|^{2m}\chi\,\eta^2 \,dt\,dx \\
&\leq \iint_{(T_0,\tau)\times \Omega} \chi'\, \eta^2\, |u_{x_j}|^{2s}\,|u_{x_k}|^{2m}\,dt\,dx\\
&+4\iint_{(T_0,\tau)\times \Omega} \langle D^2F(\nabla u)\nabla \eta , \nabla \eta\rangle\left(s\, |u_{x_j}|^{2s}\,|u_{x_k}|^{2m}+m\, |u_{x_k}|^{2m}\, |u_{x_j}|^{2s}\right)\chi\,dt\,dx
\\
&+8\,s\,m^{\frac{\theta}{2}}\left(\iint_{(T_0,\tau)\times \Omega}\langle D^2F(\nabla u)\nabla u_{x_j} , \nabla u_{x_j}\rangle |u_{x_j}|^{4s-2} |u_{x_k}|^{2m-2s} \chi\eta^2\,dt\,dx\right)^{\frac{1}{2}}\\
 & \qquad \times\left(\frac{1}{4}\iint_{(T_0,\tau)\times \Omega} \chi' \eta^2 |u_{x_k}|^{2(s+m)} \,dt\,dx
+\iint_{(T_0,\tau)\times \Omega} \langle D^2F(\nabla u) \nabla \eta, \nabla \eta\rangle |u_{x_k}|^{2(s+m)} \chi\,dt\,dx\right)^{\frac{1}{2}}.
\end{split}
\]
On the product of the two last integrals, we use the Young inequality in the form
\[
a\,b\le a^2+\frac{b^2}{4},
\]
this gives
\begin{align*}
&\chi(\tau)
\int_{\{\tau\}\times\Omega}|u_{x_j}|^{2s}\, |u_{x_k}|^{2m}\, \eta^2\,dx 
+s\,\iint_{(T_0,\tau)\times \Omega} \langle D^2F(\nabla u)\nabla u_{x_j} , \nabla u_{x_j}\rangle\, |u_{x_j}|^{2s-2}\,  |u_{x_k}|^{2m}\,\chi\,\eta^2 \,dt\,dx \\
&\leq \iint_{(T_0,\tau)\times \Omega} \chi'\, \eta^2 |u_{x_j}|^{2s}\,|u_{x_k}|^{2m}\,dt\,dx+4\,(s+m)\,\iint_{(T_0,\tau)\times \Omega} \langle D^2F(\nabla u)\nabla \eta , \nabla \eta\rangle\, |u_{x_j}|^{2s}\,|u_{x_k}|^{2m}\,\chi\,dt\,dx
\\
&+16\,s^2\, m^\theta\left(\frac{1}{4}\iint_{(T_0,\tau)\times \Omega} \chi'\, \eta^2\, |u_{x_k}|^{2(s+m)} \,dt\,dx
+\iint_{(T_0,\tau)\times \Omega} \langle D^2F(\nabla u) \nabla \eta, \nabla \eta\rangle\, |u_{x_k}|^{2(s+m)}\, \chi\,dt\,dx\right)\\
&+\iint_{(T_0,\tau)\times \Omega}\langle D^2F(\nabla u)\nabla u_{x_j} , \nabla u_{x_j}\rangle\, |u_{x_j}|^{4s-2}\, |u_{x_k}|^{2m-2s}\, \chi\,\eta^2\,dt\,dx.
\end{align*}
By the Young inequality again, we can estimate 
\[
|u_{x_j}|^{2s}\, |u_{x_k}|^{2m}\leq |u_{x_j}|^{2m+2s} + |u_{x_k}|^{2m+2s}.
\]
Using that \(s\geq 1\) in the left-hand side and \(m^\theta\leq m\) in the right-hand side, we thus obtain
\begin{align*}
&\chi(\tau)
\int_{\{\tau\}\times\Omega}|u_{x_j}|^{2s}\, |u_{x_k}|^{2m}\, \eta^2\,dx 
+\iint_{(T_0,\tau)\times \Omega}  \langle D^2F(\nabla u)\nabla u_{x_j} , \nabla u_{x_j}\rangle\, |u_{x_j}|^{2s-2}\,  |u_{x_k}|^{2m}\,\chi\,\eta^2 \,dt\,dx \\
&\leq \iint_{(T_0,\tau)\times \Omega} \chi'\, \eta^2\, (|u_{x_j}|^{2(s+m)}+|u_{x_k}|^{2(s+m)})\,dt\,dx\\
&+4\,(s+m)\,\iint_{(T_0,\tau)\times \Omega} \langle D^2F(\nabla u)\nabla \eta , \nabla \eta\rangle\, \left(|u_{x_j}|^{2(s+m)}+|u_{x_k}|^{2(s+m)}\right)\,\chi\,dt\,dx
\\
&+16\,s^2\, m\left(\frac{1}{4}\iint_{(T_0,\tau)\times \Omega} \chi'\, \eta^2\, |u_{x_k}|^{2(s+m)} \,dt\,dx
+\iint_{(T_0,\tau)\times \Omega} \langle D^2F(\nabla u) \nabla \eta, \nabla \eta\rangle |u_{x_k}|^{2(s+m)}\, \chi\,dt\,dx\right)\\
&+\iint_{(T_0,\tau)\times \Omega} \langle D^2F(\nabla u)\nabla u_{x_j} , \nabla u_{x_j}\rangle |u_{x_j}|^{4s-2}\, |u_{x_k}|^{2m-2s}\, \chi\,\eta^2\,dt\,dx.
\end{align*}
This finally implies
\begin{equation}
\label{presta}
\begin{split}
&\chi(\tau)
\int_{\{\tau\}\times\Omega}|u_{x_j}|^{2s}\, |u_{x_k}|^{2m}\, \eta^2\,dx 
+\iint_{(T_0,\tau)\times \Omega}  \langle D^2F(\nabla u)\nabla u_{x_j} , \nabla u_{x_j}\rangle\, |u_{x_j}|^{2s-2}\,  |u_{x_k}|^{2m}\,\chi\,\eta^2 \,dt\,dx \\
&\leq 16\,(s+m +s^2\,m)\iint_{(T_0,\tau)\times \Omega} \left(\chi' \,\eta^2 +\chi\, \langle D^2F(\nabla u)\nabla \eta , \nabla \eta\rangle\right)\,\left(|u_{x_j}|^{2(s+m)}+|u_{x_k}|^{2(s+m)}\right)\,dt\,dx\\
&+\iint_{(T_0,\tau)\times \Omega}\langle D^2 F(\nabla u)\nabla u_{x_j} , \nabla u_{x_j}\rangle\, |u_{x_j}|^{4s-2} |u_{x_k}|^{2m-2s} \,\chi\,\eta^2\,dt\,dx.
\end{split}
\end{equation}
{\bf Step 2: the staircase}.
Let \(\ell_0\in \mathbb{N}\setminus \{0\}\) and set \(q=2^{\ell_0}-1\). We define the two families of indices 
\[
s_\ell = 2^\ell \qquad \mbox{ and }\qquad m_\ell = q+1-2^\ell, \qquad \mbox{ for } \ell \in \{0, \dots, \ell_0\}.
\]
We observe that by construction, for every \(0\leq \ell\leq \ell_0-1\), 
\[
s_\ell+m_\ell = q+1,\qquad 4s_{\ell}-2 = 2s_{\ell+1}-2 \qquad \mbox{ and }\qquad 2m_\ell-2s_\ell=2m_{\ell+1}.
\] 
We also use that \(s_\ell+m_\ell +s_{\ell}^2\,m_\ell \leq 2\,(q+1)^3\).
Then the above inequality \eqref{presta} written for \(s=s_\ell\) and \(m=m_\ell\) with \(0\leq \ell\leq \ell_0-1\) gives
\[
\begin{split}
\chi(\tau)\,&
\int_{\{\tau\}\times\Omega}|u_{x_j}|^{2s_\ell}\, |u_{x_k}|^{2m_\ell}\, \eta^2\,dx 
\\
&+\iint_{(T_0,\tau)\times \Omega} \langle D^2F(\nabla u)\nabla u_{x_j} , \nabla u_{x_j}\rangle\, |u_{x_j}|^{2s_\ell-2}\,  |u_{x_k}|^{2m_\ell}\,\chi\,\eta^2 \,dt\,dx \\
&\leq 32\,(q+1)^3\,\iint_{(T_0,\tau)\times \Omega} \left(\chi'\, \eta^2 +\chi\,\langle D^2F(\nabla u)\nabla \eta , \nabla \eta\rangle\right)\left(|u_{x_j}|^{2(q+1)}+|u_{x_k}|^{2(q+1)}\right)\,dt\,dx\\
&+\iint_{(T_0,\tau)\times \Omega} \langle D^2F(\nabla u)\nabla u_{x_j} , \nabla u_{x_j}\rangle\, |u_{x_j}|^{2s_{\ell+1}-2}\, |u_{x_k}|^{2m_{\ell+1}}\, \chi\,\eta^2\,dt\,dx.
\end{split}
\]
Observe that we used that $2\,s_\ell+2\,m_\ell=2\,(q+1)$ on the first term on the right-hand side.
By summing from \(\ell=0\) up to \(\ell=\ell_0-1\), one gets
\begin{align*}
&\chi(\tau)
\sum_{\ell=0}^{\ell_0-1}\int_{\{\tau\}\times\Omega}|u_{x_j}|^{2s_\ell}\, |u_{x_k}|^{2m_\ell} \,\eta^2\,dx 
+\iint_{(T_0,\tau)\times \Omega}  \langle D^2F(\nabla u)\nabla u_{x_j} , \nabla u_{x_j}\rangle\, |u_{x_k}|^{2q}\,\chi\,\eta^2 \,dt\,dx \\
&\leq C\,q^3\,\ell_0\iint_{(T_0,\tau)\times \Omega} \left(\chi' \eta^2 +\chi\,\langle D^2F(\nabla u)\nabla \eta , \nabla \eta\rangle\right)\left(|u_{x_j}|^{2(q+1)}+|u_{x_k}|^{2(q+1)}\right)\,dt\,dx\\
&+\iint_{(T_0,\tau)\times \Omega} \langle D^2F(\nabla u)\nabla u_{x_j} , \nabla u_{x_j}\rangle\, |u_{x_j}|^{2q}\, \chi\,\eta^2\,dt\,dx.
\end{align*}
For the last term, we apply Lemma \ref{lm:standard} with the choice 
\[
h(t)=\frac{|t|^{q+1}}{q+1}, \qquad t\in \mathbb{R}.
\]
We thus get
\[
\begin{split}
\chi(\tau)\,
\sum_{\ell=0}^{\ell_0-1}\int_{\{\tau\}\times\Omega}|u_{x_j}|^{2s_\ell}\,& |u_{x_k}|^{2m_\ell}\, \eta^2\,dx 
+\iint_{(T_0,\tau)\times \Omega}  \langle D^2F(\nabla u)\,\nabla u_{x_j} , \nabla u_{x_j}\rangle\,|u_{x_k}|^{2q}\,\chi\,\eta^2 \,dt\,dx \\
&\leq C\,q^4\,\iint_{(T_0,\tau)\times \Omega} \left(\chi'\, \eta^2 +\chi\,\langle D^2F(\nabla u)\nabla \eta , \nabla \eta\rangle\right)\,\left(|u_{x_j}|^{2\,(q+1)}+|u_{x_k}|^{2\,(q+1)}\right)\,dt\,dx\\
&+ \frac{1}{(q+1)^2}\iint_{(T_0,\tau)\times \Omega} \chi'\, \eta^2\, |u_{x_j}|^{2(q+1)}\,dt\,dx\\
&+\frac{4}{(q+1)^2}\iint_{(T_0,\tau)\times \Omega}\langle D^2F(\nabla u)\nabla \eta, \nabla \eta\rangle\, |u_{x_j}|^{2(q+1)}\,\chi \,dt\,dx.
\end{split}
\]
This is turn implies that
\begin{equation}
\begin{split}
&\chi(\tau)
\sum_{\ell=0}^{\ell_0-1}\int_{\{\tau\}\times\Omega}|u_{x_j}|^{2s_\ell} |u_{x_k}|^{2m_\ell} \eta^2\,dx 
+\iint_{(T_0,\tau)\times \Omega}  \langle D^2F(\nabla u)\nabla u_{x_j} , \nabla u_{x_j}\rangle |u_{x_k}|^{2q}\chi\eta^2 \,dt\,dx\\
&\leq C\,q^4\,\iint_{(T_0,\tau)\times \Omega} \left(\chi' \eta^2 +\chi\,\langle D^2F(\nabla u)\nabla \eta , \nabla \eta\rangle\right)\left(|u_{x_j}|^{2(q+1)}+|u_{x_k}|^{2(q+1)}\right)\,dt\,dx, \label{eq1443}
\end{split}
\end{equation}
possibly for a different constant $C>0$.
\vskip.2cm\noindent
{\bf Step 3: weak ellipticity and boundedness of \(D^2F\)}.
We now use the explicit expression of \(F\). We recall that
\[
F(\xi)=\frac{1}{p}\,\sum_{i=1}^N |\xi_i|^p + \varepsilon\, G(\xi),\qquad \mbox{ for every } \xi \in\mathbb{R}^N,
\]
where 
\[
G(\xi)=\frac{1}{p}\,\left(1+|\xi|^2\right)^{\frac{p}{2}},\qquad \mbox{ for every }\xi \in\mathbb{R}^N.
\]
If follows that for every $\xi,\,\lambda\in\mathbb{R}^N$, we have
\[
(p-1)\,\sum_{i=1}^{N}|\xi_i|^{p-2}\,\lambda_{i}^2\le \langle D^2F(\xi)\lambda, \lambda\rangle \leq C\,\left(|\xi|^{p-2} +1\right)\,|\lambda|^2.
\]
By inserting these estimates into \eqref{eq1443}, one gets
\[
\begin{split}
\chi(\tau)\,
\sum_{\ell=0}^{\ell_0-1}&\int_{\{\tau\}\times\Omega}|u_{x_j}|^{2s_\ell}\, |u_{x_k}|^{2m_\ell}\, \eta^2\,dx 
\\
&+(p-1)\,\sum_{i=1}^N\iint_{(T_0,\tau)\times \Omega}  |u_{x_i}|^{p-2}\, u_{x_ix_j}^2\, |u_{x_k}|^{2q}\,\chi\,\eta^2 \,dt\,dx \\
&\leq C\,q^4\,\iint_{(T_0,\tau)\times \Omega} \left(\chi'\, \eta^2 +\chi\,\left(|\nabla u|^{p-2}+1\right)\,|\nabla \eta|^2\right)\,\left(|u_{x_j}|^{2(q+1)}+|u_{x_k}|^{2(q+1)}\right)\,dt\,dx. 
\end{split}
\]
We consider the second term on the left-hand side: observe that by keeping in the sum only the term with $i=k$ and dropping the others, we get
\[
\begin{split}
\sum_{i=1}^N \iint_{(T_0,\tau)\times \Omega} |u_{x_i}|^{p-2}\,u_{x_ix_j}^2\, |u_{x_k}|^{2q}\,\chi\,\eta^2 \,dt\,dx
 & \geq  \iint_{(T_0,\tau)\times \Omega} |u_{x_k}|^{p-2}\,u_{x_kx_j}^2\, |u_{x_k}|^{2q}\,\chi\,\eta^2 \,dt\,dx\\
&= \frac{1}{\left(q+\dfrac{p}{2}\right)^2}\iint_{(T_0,\tau)\times \Omega} \left|\left( |u_{x_k}|^{q+\frac{p-2}{2}}u_{x_k}\right)_{x_j}\right|^2\,\chi\,\eta^2 \,dt\,dx.
\end{split}
\]
When we sum over $j=1,\dots,N$ the resulting estimate, we thus get
\[
\begin{split}
\chi(\tau)\,
\sum_{\ell=0}^{\ell_0-1}&\int_{\{\tau\}\times\Omega}\sum_{j=1}^{N}|u_{x_j}|^{2s_\ell}\, |u_{x_k}|^{2m_\ell} \,\eta^2\,dx 
+\frac{p-1}{\left(q+\dfrac{p}{2}\right)^2}\iint_{(T_0,\tau)\times \Omega}\left|\nabla \left( |u_{x_k}|^{q+\frac{p-2}{2}}\,u_{x_k}\right)\right|^2\, \chi\,\eta^2 \,dt\,dx \\
&\leq Cq^4\iint_{(T_0,\tau)\times \Omega} \left(\chi' \eta^2 +\chi\left(|\nabla u|^{p-2}+1\right)|\nabla \eta|^2\right)\left(\sum_{j=1}^{N} |u_{x_j}|^{2(q+1)}+N\,|u_{x_k}|^{2(q+1)}\right)\,dt\,dx,
\end{split}
\]
which in turn implies
\[
\begin{split}
&\chi(\tau)
\sum_{\ell=0}^{\ell_0-1}\int_{\{\tau\}\times\Omega}\sum_{j=1}^{N}|u_{x_j}|^{2s_\ell}\, |u_{x_k}|^{2m_\ell} \,\eta^2\,dx 
+\iint_{(T_0,\tau)\times \Omega}\left|\nabla \left( |u_{x_k}|^{q+\frac{p-2}{2}}u_{x_k}\right)\right|^2 \,\chi\,\eta^2 \,dt\,dx \\
&\leq Cq^6\iint_{(T_0,\tau)\times \Omega} \left(\chi' \eta^2 +\chi\left(|\nabla u|^{p-2}+1\right)|\nabla \eta|^2\right)\,|\nabla u|^{2(q+1)}\,dt\,dx,
\end{split}
\]
up to redefine the constant $C>0$.
We now add the term 
\[
\iint_{(T_0,\tau)\times \Omega} |u_{x_k}|^{2q+p}\, \chi\,|\nabla \eta|^2 \,dt\,dx,
\] 
on both sides of the above inequality. With some algebraic manipulations, this gives
\begin{align*}
&\chi(\tau)
\sum_{\ell=0}^{\ell_0-1}\int_{\{\tau\}\times\Omega}\sum_{j=1}^{N}|u_{x_j}|^{2s_\ell} \,|u_{x_k}|^{2m_\ell}\, \eta^2\,dx 
+\iint_{(T_0,\tau)\times \Omega}\left|\nabla \left( |u_{x_k}|^{q+\frac{p-2}{2}}\,u_{x_k}\,\eta\right)\right|^2\, \chi \,dt\,dx \\
&\leq C\,q^6\iint_{(T_0,\tau)\times \Omega} \left(\chi'\, \eta^2 +\chi\,\left(|\nabla u|^{p-2}+1\right)|\nabla \eta|^2\right)\,|\nabla u|^{2(q+1)}\,dt\,dx
+\iint_{(T_0,\tau)\times \Omega} |u_{x_k}|^{2q+p} \chi|\nabla \eta|^2 \,dt\,dx\\
&\leq C\,q^6\iint_{(T_0,\tau)\times \Omega} \left(\chi' \eta^2 +\chi\left(|\nabla u|^{p-2}+1\right)|\nabla \eta|^2\right)|\nabla u|^{2(q+1)}\,dt\,dx.
\end{align*}
By using the Sobolev inequality in the spatial variable for the second term of the left-hand side, one obtains
\[
\begin{split}
&\chi(\tau)
\sum_{\ell=0}^{\ell_0-1}\int_{\{\tau\}\times\Omega}\sum_{j=1}^{N}|u_{x_j}|^{2s_\ell}\, |u_{x_k}|^{2m_\ell}\, \eta^2\,dx 
+\int_{T_0}^{\tau}\chi\left(\int_{\Omega}\left( |u_{x_k}|^{2q+p}\eta^2\right)^\frac{2^*}{2}\,dx\right)^{\frac{2}{2^*}}\,dt \\
&\leq C\,q^6\iint_{(T_0,\tau)\times \Omega} \left(\chi'\, \eta^2 +\chi\,\left(|\nabla u|^{p-2}+1\right)|\nabla \eta|^2\right)\,|\nabla u|^{2(q+1)}\,dt\,dx.
\end{split}
\]
We now finally sum over \(k=1, \dots, N\) and use the Minkowski inequality for the second term of the left-hand side. This gives
\[
\begin{split}
&\chi(\tau)
\sum_{\ell=0}^{\ell_0-1}\int_{\{\tau\}\times\Omega}\sum_{j=1}^{N}|u_{x_j}|^{2s_\ell} \sum_{k=1}^{N}|u_{x_k}|^{2m_\ell}\, \eta^2\,dx 
+\int_{T_0}^{\tau}\chi\left(\int_{\Omega}\left( \sum_{k=1}^{N}|u_{x_k}|^{2q+p}\eta^2\right)^\frac{2^*}{2}\,dx\right)^{\frac{2}{2^*}}\,dt \\
&\leq C\,q^6\iint_{(T_0,\tau)\times \Omega} \left(\chi' \eta^2 +\chi\left(|\nabla u|^{p-2}+1\right)|\nabla \eta|^2\right)|\nabla u|^{2(q+1)}\,dt\,dx.
\end{split}
\]
We introduce the expedient function 
\[
\mathcal{U}=\max_{1\leq k \leq N} |u_{x_k}|.
\] 
We observe that for every \(s\geq 0\), one has 
\[
\mathcal{U}^{s}\leq \sum_{k=1}^N|u_{x_k}|^{s}\leq N\, \mathcal{U}^{s}.
\] 
In particular for \(s=2\), we get \(\mathcal{U}\leq |\nabla u|\leq \sqrt{N}\, \mathcal{U}\). Hence, we get
\begin{equation}
\begin{split}
\chi(\tau)
\int_{\{\tau\}\times\Omega}\mathcal{U}^{2(q+1)}\, \eta^2\,dx& 
+\int_{T_0}^{\tau}\chi\left(\int_{\Omega} \mathcal{U}^{(2q+p)\frac{2^*}{2}}\,\eta^{2^*}\,dx\right)^{\frac{2}{2^*}}dt \\
&\leq C\,q^6\iint_{(T_0,\tau)\times \Omega} \left(\chi'\, \eta^2 +\chi\,\left(\mathcal{U}^{p-2}+1\right)\,|\nabla \eta|^2\right)\mathcal{U}^{2(q+1)}\,dt\,dx\\
&\leq C\,q^6\iint_{(T_0,\tau)\times \Omega} \left(\chi'\, \eta^2 +\chi\,|\nabla \eta|^2\right)\left(1+\mathcal{U}^{2q+p}\right)\,dt\,dx.\label{eq1529}
\end{split}
\end{equation}
{\bf Step 4: choice of the cut-off functions}.
 Let \((t_0,x_0)\in I\times\Omega\) and \(0<r<R\le 1\) such that the cube \(Q_{R}(x_0)=(x_0-R,x_0+R)^N\) is compactly contained in \(\Omega\). We further require that 
\[
(t_0-R^p,t_0)\Subset I,
\]
so that we must have $T_0<t_0<T_1$ and \(R^p<t_0-T_0\). Let \(\chi:[T_0,T_1]\to \mathbb{R} \) be a non-decreasing Lipschitz function such that 
 \[
\chi(t)=0\ \mbox{ on } [T_0, t_0-R^p],\qquad \chi(t)=1\ \mbox{ on } [t_0-r^p, t_0]\qquad \mbox{ and }\qquad |\chi'(t)|\leq \frac{C}{(R-r)^p}.
\]
Let \(\eta\in C^{\infty}_0(Q_R(x_0))\) be such that 
\[
0\leq \eta\leq 1,\qquad  \eta=1\ \mbox{ on } Q_{r}(x_0)\qquad \mbox{ and }\qquad |\nabla \eta|\leq \frac{C}{R-r}.
\] 
We recall the notation for the anisotropic parabolic cube 
\[
Q_\rho(t_0,x_0)=(t_0-\rho^p, t_0)\times Q_{\rho}(x_0) .
\] 
With such a choice of \(\chi\) and $\eta$, we use \eqref{eq1529} twice: 
\begin{itemize}
\item firstly, by dropping the second term in the left-hand side, and taking the supremum in $\tau$ over the interval $(t_{0}-r^p,t_0)$; 
\vskip.2cm
\item secondly, by dropping the first term in the left-hand side and taking \(\tau = t_0\). 
\end{itemize}
By summing up the two resulting contributions, this gives
\begin{align}
\sup_{\tau\in (t_0-r^p, t_0)}
\int_{\{\tau\}\times Q_r(x_0)}\mathcal{U}^{2(q+1)} \,dx& 
+\int_{t_0-r^p}^{t_0}\,\left(\int_{Q_r(x_0)} \mathcal{U}^{(2q+p)\frac{2^*}{2}}\,dx\right)^{\frac{2}{2^*}}\,dt \nonumber\\
&\leq C\frac{q^6}{(R-r)^p}\,\int_{Q_R(t_0,x_0)} \left(1+\mathcal{U}^{2q+p}\right)\,dt\,dx. \label{eq1551}
\end{align}
Observe that we also used that $(R-r)^p\le (R-r)^2$, since $R\le 1$ and $p>2$.
By the H\"older inequality, one has
\[
\begin{split}
\int_{Q_r(t_0,x_0)} \mathcal{U}^{2q+p + \frac{4(q+1)}{N}}\,dt\,dx
&\leq 
\int_{t_0-r^p}^{t_0}\left(\int_{Q_r(x_0)} \mathcal{U}^{(2q+p)\frac{2^*}{2}}\,dx\right)^{\frac{2}{2^*}}\left( \int_{Q_r(x_0)} \mathcal{U}^{2(q+1)}\,dx\right)^{\frac{2}{N}}\,dt\\
&\leq \left(\sup_{\tau\in (t_0-r^p, t_0)}\int_{\{\tau\}\times Q_r(x_0)} \mathcal{U}^{2(q+1)}\,dx\right)^{\frac{2}{N}}\int_{t_0-r^p}^{t_0}\left(\int_{Q_r(x_0)} \mathcal{U}^{(2q+p)\frac{2^*}{2}}\,dx\right)^{\frac{2}{2^*}}\,dt.
\end{split}
\]
Using \eqref{eq1551}, this implies
\begin{equation}\label{base-bis}
\iint_{Q_r(t_0,x_0)} \mathcal{U}^{2q+p + \frac{4(q+1)}{N}}\,dt\,dx
 \leq \left(C\frac{q^6}{(R-r)^p}\,\iint_{Q_R(t_0,x_0)} \left(1+\mathcal{U}^{2q+p}\right)\,dt\,dx\right)^{\frac{2}{N}+1}. 
\end{equation}
{\bf Step 5: the local $L^\infty$ estimate on \(\nabla u\).}
Take $q=q_j=2^{j+1}-1$ with \(j\in \mathbb{N}\). 
We set
\[
\gamma_j:=2\,q_j+p=2^{j+2}-2+p,\qquad \delta_j:=2\,q_j+p+\frac{4}{N}\,(q_j+1)=2^{j+2}-2+p+\frac{4}{N}\,2^{j+1},
\]
and 
\[
\tau_j=\frac{\delta_j-\gamma_j}{\delta_j-\gamma_{j-1}}\,\frac{\gamma_{j-1}}{\gamma_j}.
\]
We observe that \(\gamma_{j-1}<\gamma_j<\delta_j\) and \(\tau_j\in (0,1)\) is defined in a such a way that
\[
\frac{1}{\gamma_j} = \frac{\tau_j}{\gamma_{j-1}} + \frac{1-\tau_j}{\delta_j}.
\]
The estimate \eqref{base-bis} can be rewritten as
\[
\iint_{Q_r(t_0,x_0)} \mathcal{U}^{\delta_j}\,dt\,dx
 \leq \left(C\frac{q_j^6}{(R-r)^p}\,\iint_{Q_R(t_0,x_0)} \left(1+\mathcal{U}^{\gamma_j}\right)\,dt\,dx\right)^{\frac{2}{N}+1}
\]
By interpolation in $L^p$ spaces, we obtain
\[
 \iint_{Q_r(t_0,x_0)} \mathcal{U}^{\gamma_j}\,dt\,dx\le\left( \iint_{Q_r(t_0,x_0)} \mathcal{U}^{\gamma_{j-1}}\,dt\,dx\right)^{\tau_j\,\frac{\gamma_j}{\gamma_{j-1}}}\,\left(\iint_{Q_r(t_0,x_0)} \mathcal{U}^{\delta_j}\,dt\,dx\right)^{(1-\tau_j)\,\frac{\gamma_j}{\delta_j}}.
\]
By lengthy but elementary computations, we see that
\[
(1-\tau_j)\,\frac{\gamma_j}{\delta_j}=\frac{N}{N+4}\qquad \mbox{ and }\qquad \tau_j\,\frac{\gamma_j}{\gamma_{j-1}}=\frac{4}{N+4},
\]
thus the combination of the two previous inequalities leads to
\[
\iint_{Q_r(t_0,x_0)} \mathcal{U}^{\gamma_j}\,dt\,dx\le\left(\iint_{Q_r(t_0,x_0)} \mathcal{U}^{\gamma_{j-1}}\,dt\,dx\right)^\frac{4}{N+4}\,\left(C\frac{q^6_j}{(R-r)^p}\,\iint_{Q_R(t_0,x_0)} \Big(1+\mathcal{U}^{\gamma_j}\Big)\,dt\,dx\right)^\frac{N+2}{N+4}.
\]
We now use the Young inequality
\[
\begin{split}
\left(\iint_{Q_R(t_0,x_0)} \mathcal{U}^{\gamma_{j-1}}\,dt\,dx\right)^\frac{4}{N+4}&\left(C\,\frac{q^6_j}{(R-r)^p}\,\iint_{Q_R(t_0,x_0)} \Big(1+\mathcal{U}^{\gamma_j}\Big)\,dt\,dx\right)^\frac{N+2}{N+4}\\
&\le \frac{N+2}{N+4}\,\iint_{Q_R(t_0,x_0)} \Big(1+\mathcal{U}^{\gamma_{j}}\Big)\,dt\,dx\\
&+\frac{2}{N+4}\,\left(C\,\frac{q^6_j}{(R-r)^p}\right)^\frac{N+2}{2}\,\left(\iint_{Q_R(t_0,x_0)} \mathcal{U}^{\gamma_{j-1}}\,dt\,dx\right)^2.
\end{split}
\]
Thus we have proved the estimate
\begin{equation}\label{nevica_0}
\begin{split}
 \iint_{Q_r(t_0,x_0)} \mathcal{U}^{\gamma_j}\,dt\,dx&\le \frac{N+2}{N+4}\, \iint_{Q_R(t_0,x_0)} \Big(1+\mathcal{U}^{\gamma_j}\Big)\,dt\,dx\\
&+\frac{2}{N+4}\,\left(C\frac{q^6_j}{(R-r)^p}\right)^\frac{N+2}{2}\,\left( \iint_{Q_R(t_0,x_0)} \mathcal{U}^{\gamma_{j-1}}\,dt\,dx\right)^2.
\end{split}
\end{equation}
Next, we observe that
\[
\frac{|Q_r(t_0,x_0)|}{|Q_R(t_0,x_0)|^2}\leq C\, \left(\frac{r}{R}\right)^{N+p} \frac{1}{R^{N+p}} \leq \frac{C}{(R-r)^{N+p}}\leq \frac{C}{(R-r)^{p\,\frac{N+2}{2}}},
\]
where the last inequality relies on the two facts: $R\le 1$ and $p>2$. 
Hence, using that \(q_j\geq 1\), one gets
\[
\begin{split}
|Q_r(t_0,x_0)| &\leq C\,\left(\frac{q^6_j}{(R-r)^p}\right)^\frac{N+2}{2}\,|Q_R(t_0,x_0)|^2\\
& \leq C\,\left(\frac{q^6_j}{(R-r)^p}\right)^\frac{N+2}{2}\,\left(\iint_{Q_R(t_0,x_0)}\Big(1+ \mathcal{U}^{\gamma_{j-1}}\Big)\,dt\,dx\right)^2.
\end{split}
\]
By summing $|Q_r(t_0,x_0)|$ on both sides of \eqref{nevica_0}, this implies
\[
\begin{split}
 \iint_{Q_r(t_0,x_0)} \Big(1+\mathcal{U}^{\gamma_j}\Big)\,dt\,dx&\le \frac{N+2}{N+4}\, \iint_{Q_R(t_0,x_0)} \Big(1+\mathcal{U}^{\gamma_j}\Big)\,dt\,dx\\
&+C\,\left(\frac{q^6_j}{(R-r)^p}\right)^\frac{N+2}{2}\,\left( \iint_{Q_R(t_0,x_0)} \Big(1+\mathcal{U}^{\gamma_{j-1}}\Big)\,dt\,dx\right)^2.
\end{split}
\]
We can now appeal to \cite[Lemma 6.1]{Gi} and absorb the term on the right-hand side containing $1+\mathcal{U}^{\gamma_j}$, in a standard way. By using the definition of \(q_j\), this leads to
\begin{equation}
\label{conj}
\iint_{Q_r(t_0,x_0)} \Big(1+\mathcal{U}^{\gamma_j}\Big)\,dt\,dx\le C\,\frac{2^{3(N+2)j}}{(R-r)^{p\,\frac{N+2}{2}}}\,\left(\iint_{Q_R(t_0,x_0)}\Big(1+\mathcal{U}^{\gamma_{j-1}}\Big)\,dt\,dx\right)^2.
\end{equation}
We want to iterate the previous estimate on a sequence of shrinking parabolic cylinders. We fix two radii $0<r<R\le 1$, then we consider the sequence 
\[
R_j=r+\frac{R-r}{2^{j-1}},\qquad j\in\mathbb{N}\setminus\{0\},
\]
and we apply \eqref{conj} with \(R_{j+1}<R_j\) in place of \(r<R\). We introduce the notation
\[
Y_j=\iint_{Q_{R_{j}}(t_0,x_0)} \left(1+\mathcal{U}^{\gamma_{j-1}}\right)\,dt\,dx.
\]
Thus we get
\[
Y_{j+1} \le C\,2^{2\,p\,(N+2)\,j}\,(R-r)^{-\frac{p}{2}\,(N+2)}\,Y_{j}^2.
\]
By iterating the previous estimate starting from $j=1$, we obtain for every \(n\in \mathbb{N}\setminus \{0\}\),
\[
\begin{split}
Y_{n+1}&\le \Big(C\,2^{2\,p\,(N+2)}\,(R-r)^{-\frac{p}{2}\,(N+2)}\Big)^{\sum\limits_{j=0}^{n-1}(n-j)\,2^j}\,Y_1^{2^n}.
\end{split}
\]
possibly for a different constant \(C>1\).
We now take the power $2^{-n}$ on both sides:
\[
Y_{n+1}^{2^{-n}}\le \Big(C\,2^{2\,p\,(N+2)}\,(R-r)^{-\frac{p}{2}\,(N+2)}\Big)^{\sum\limits_{j=0}^{n-1}(n-j)\,2^{j-n}}
\,Y_1\leq C\,(R-r)^{-p\,(N+2)}
\,Y_1,
\]
where the last inequality relies on the fact that 
\[
\sum\limits_{j=0}^{n-1}(n-j)\,2^{j-n}\leq \sum\limits_{j=1}^{\infty}\frac{j}{2^{j}}=2.
\] 
We thus get
\begin{equation}
\label{eq1667}
\begin{split}
\|\mathcal{U}\|_{L^{\infty}(Q_r(t_0,x_0))}&=\lim_{n\to \infty}\left(\iint_{Q_{R_{n+1}(t_0,x_0)}} \mathcal{U}^{\gamma_{n}}\,dt\,dx\right)^\frac{1}{\gamma_{n}}\leq \limsup_{n\to +\infty}\left(Y_{n+1}^{2^{-n}} \right)^{\frac{2^n}{\gamma_n}}\\
&\leq \limsup_{n\to +\infty}\left( C\,(R-r)^{-p\,(N+2)}\,Y_{1} \right)^{\frac{2^n}{\gamma_n}}\\
& \leq C\,  (R-r)^{-p\,\frac{N+2}{4}}\,\left(\iint_{Q_R(t_0,x_0)} \Big(1+\mathcal{U}^{p+2}\Big)\,dt\,dx\right)^\frac{1}{4}.
\end{split}
\end{equation}
Here, we have also used that \(\gamma_0=p+2\) and $\gamma_n\sim 2^{n+2}$, for $n$ going to $\infty$.
Finally, in order to remove the dependence on the $L^{p+2}$ norm of the gradient, we use a standard interpolation trick. We write
\[
\begin{split}
\left(\iint_{Q_R(t_0,x_0)} \Big(1+\mathcal{U}^{p+2}\Big)\,dt\,dx\right)^\frac{1}{4} 
&\leq \left(\|\mathcal{U}\|_{L^{\infty}(Q_R(t_0,x_0))}^2\iint_{Q_R(t_0,x_0)} \mathcal{U}^{p}\,dt\,dx +|Q_R(t_0,x_0)|\right)^\frac{1}{4} \\
&\leq \|\mathcal{U}\|_{L^{\infty}(Q_R(t_0,x_0))}^{\frac{1}{2}}\,\left(\iint_{Q_R(t_0,x_0)} \mathcal{U}^{p}\,dt\,dx\right)^\frac{1}{4}  + C\,R^\frac{N+p}{4}.
\end{split}
\]
Inserting this estimate into  \eqref{eq1667}, we get 
\[
\begin{split}
\|\mathcal{U}\|_{L^{\infty}(Q_r(t_0,x_0))}
& \leq \frac{C}{(R-r)^{p\,\frac{N+2}{4}}}\,\left[\|\mathcal{U}\|_{L^{\infty}(Q_R(t_0,x_0))}^{\frac{1}{2}}\,\left(\iint_{Q_R(t_0,x_0)} \mathcal{U}^{p}\,dt\,dx\right)^\frac{1}{4}  + R^\frac{N+p}{4}\right]\\
&\leq \frac{1}{2}\,\|\mathcal{U}\|_{L^{\infty}(Q_R(t_0,x_0))} +  \frac{C}{(R-r)^{p\,\frac{N+2}{2}}}\,\left(\iint_{Q_R(t_0,x_0)} \mathcal{U}^{p}\,dt\,dx\right)^\frac{1}{2}  + \frac{C\,R^\frac{N+p}{4}}{(R-r)^{p\,\frac{N+2}{4}}}\\
&\leq \frac{1}{2}\,\|\mathcal{U}\|_{L^{\infty}(Q_R(t_0,x_0))} +  \frac{C}{(R-r)^{p\,\frac{N+2}{2}}}\,\left[\left(\iint_{Q_R(t_0,x_0)} \mathcal{U}^{p}\,dt\,dx\right)^\frac{1}{2}  +1\right], 
\end{split}
\]
where in the last line, we have used that \(R\leq 1\).
By \cite[Lemma 6.1]{Gi} again, we get
\[
\|\mathcal{U}\|_{L^{\infty}(Q_r(t_0,x_0))}
 \leq \frac{C}{(R-r)^{p\,\frac{N+2}{2}}}\,\left[\left(\iint_{Q_R(t_0,x_0)} \mathcal{U}^{p}\,dt\,dx\right)^\frac{1}{2}  + 1\right].
\]
Since \(\mathcal{U}\leq |\nabla u|\leq \sqrt{N}\, \mathcal{U}\), this  completes the proof. 
\end{proof}
\begin{rem}
An inspection of the proof reveals that the exponent $\alpha$ in \eqref{uepsbound} can be taken to be 
\[
\alpha=\left\{\begin{array}{lr}
\dfrac{N+2}{2},& \mbox{ if } N\geq 3,\\
\mbox{ any number \(>2\)}, & \mbox{ if } N=2.
\end{array}
\right.
\]
In the case $N=2$, the constant $C$ blows-up as $\alpha$ tends to $2$.
\end{rem}
\section{Proof of the Main Theorem}
\label{sec:5}

We take \(u\in L^{p}_{\rm loc}(I ; W^{1,p}_{\rm loc}(\Omega))\) to be a local weak solution  of equation \eqref{equationL}, i.e. it satisfies
\begin{equation}
\label{weak-ter}
\begin{split}
-\iint_{I\times\Omega} u\,\varphi_t\,dt\,dx&+\iint_{I\times\Omega} \langle \nabla F_{0}(\nabla u), \nabla \varphi\rangle\,dt\,dx=0,
\end{split}
\end{equation}
for every $\varphi\in C^\infty_0(I\times\Omega)$. We recall that $F_0$ indicates the convex function
\[
F_0(\xi)=\frac{1}{p}\,\sum_{i=1}^{N}|\xi_i|^{p},\qquad \mbox{ for every }\xi\in\mathbb{R}^N.
\]
Our program is as follows: 
\begin{itemize}
\item we approximate $u$ by solutions $u_\varepsilon$ of the regularized equation;
\vskip.2cm
\item we transfer the Lipschitz estimate of Proposition \ref{lm-apriori-estimate} from $u_\varepsilon$ to $u$;
\vskip.2cm
\item we use a scaling argument to ``rectify'' the local estimate and obtain \eqref{stimapriori}. 
\end{itemize}
We start by recalling that $u$ has the following additional properties: for every subinterval $J\Subset I$ and every open set $\mathcal{O}\Subset \Omega$, we have
\begin{equation}
\label{aggiuntive}
u_t\in L^{p'}(J; W^{-1,p'}(\mathcal{O}))\qquad \mbox{ and }\qquad u\in C(J;L^2(\mathcal{O})).
\end{equation}
Here \(W^{-1,p'}(\mathcal{O})\) is the topological dual space of \(W^{1,p}_0(\mathcal{O})\) and the latter is the completion of $C^\infty_0(\mathcal{O})$ with respect to the $L^p$ norm of the gradient.
\par 
We briefly recall the argument to get \eqref{aggiuntive}, for completeness.
Fix \(J\) and $\mathcal{O}$ as above, 
by using equation \eqref{weak-ter} and the fact that \(\nabla F_0(\nabla u)\in L^{p'}_{\rm loc}(I\times\Omega)\), we get
\[
\begin{split}
\left|\iint_{J\times\mathcal{O}} u\,\psi_t\,dt\,dx\right|&=\left|\iint_{J\times\mathcal{O}} \langle \nabla F_{0}(\nabla u), \nabla \psi\rangle\,dt\,dx\right|\\
&\le C\,\|\nabla \psi\|_{L^p(J\times\mathcal{O})}= \|\psi\|_{L^p(J;W^{1,p}_0(\mathcal{O}))}, \qquad \mbox{ for every }\psi\in C^\infty_0(J\times\mathcal{O}).
\end{split}
\]
By density, we can extend the linear functional
\[
\Lambda:\psi\mapsto \iint_{J\times\mathcal{O}} u\,\psi_t\,dt\,dx,
\]
to the whole space \(L^{p}(J; W^{1,p}_0(\mathcal{O}))\). This implies that (see for example \cite[Theorem 1.5, Chapter III]{Sh})
\[
\Lambda \in \Big(L^{p}(J; W^{1,p}_0(\mathcal{O}))\Big)^*=L^{p'}(J; W^{-1,p'}(\mathcal{O})).
\]
By definition of $\Lambda$ and of weak derivative, we get the first property in \eqref{aggiuntive}.
\par
The second property in \eqref{aggiuntive} follows by recalling that (see \cite[Proposition 1.2, Chapter III]{Sh})
\[
\Big\{\varphi\in L^p(J;W^{1,p}_0(\mathcal{O}))\, :\, \varphi_t\in L^{p'}(J;W^{-1,p'}(\mathcal{O}))\Big\}\subset C(\overline{J};L^2(\Omega)).
\]
In light of \eqref{aggiuntive} and 
since we are only interested in a local result, it is not restrictive to assume from the beginning that
\[
u\in L^{p}(I ; W^{1,p}(\Omega))\cap C(\overline{I};L^2(\Omega)) \qquad \mbox{ and }\qquad u_t \in L^{p'}(I; W^{-1,p'}(\Omega)).
\] 
{\bf Part 1: convergence of the approximation scheme}.
We remember the definition \eqref{Fe} of $F_\varepsilon$ for every \(\varepsilon\geq 0\).
We then consider the approximating initial boundary value problem parametrized by \(\varepsilon>0\) 
\[
\left\{\begin{array}{ccll}
v_t&=&\mathrm{div} \nabla F_\varepsilon(\nabla v),& \mbox{ in }I\times\Omega,\\
v&=&u,& \mbox{ on } I\times\partial \Omega,\\
v(0,\cdot)&=&u(0,\cdot), & \mbox{ in }\Omega.
\end{array}
\right.
\]
By \cite[Propositon 4.1, Chapter III]{Sh}, there exists a unique weak solution \(u_{\varepsilon}\in L^{p}(I ; W^{1,p}(\Omega))\) to this problem, such that 
\[
\left(u_{\varepsilon}\right)_t \in L^{p'}(I;W^{-1,p'}(\Omega))\qquad \mbox { and thus  } u_{\varepsilon}\in C(\overline{I};L^{2}(\Omega)).
\]
The boundary value condition is taken in the sense that 
\[
u_\varepsilon-u\in L^{p}(I;W^{1,p}_0(\Omega)),
\]
and the initial condition is taken in the $L^2$ sense, which is feasible thanks to the continuity properties of both $u_\varepsilon$ and $u$. The function $u_\varepsilon$ verifies
\begin{equation}
\label{weak-ter-approx}
-\iint_{I\times\Omega} u_\varepsilon\,\varphi_t\,dt\,dx+\iint_{I\times\Omega} \langle \nabla F_{\varepsilon}(\nabla u_\varepsilon), \nabla \varphi\rangle \,dt\,dx=0,
\end{equation}
for every $\varphi\in C^\infty_0(I\times\Omega)$. An integration by parts in \eqref{weak-ter-approx} leads to
\begin{equation}
\label{eq1783}
\int_I \left( \left(u_{\varepsilon}\right)_t,\varphi\right)_{(W^{-1,p' }, W^{1,p}_0)}\,dt+\iint_{I\times\Omega} \langle \nabla F_{\varepsilon}(\nabla u_{\varepsilon}),\nabla\varphi\rangle \,dt\,dx=0.
\end{equation}
By density, the above identity remains true for every \(\varphi\in L^{p}(I; W^{1,p}_0(\Omega))\). Then the choice  \(\varphi=u_\varepsilon-u\) yields
\[
\int_{I} \left( \left(u_{\varepsilon}\right)_t, u_{\varepsilon}-u\right)_{(W^{-1,p' }, W^{1,p}_0)}\,dt+\iint_{I\times\Omega} \langle \nabla F_{\varepsilon}(\nabla u_{\varepsilon}), \nabla u_{\varepsilon} -\nabla u \rangle \,dt\,dx=0.
\] 
By recalling the expression \eqref{Fe} of $F_\varepsilon$, we write
\[
\begin{split}
\langle \nabla F_{\varepsilon}\left(\nabla u_{\varepsilon}\right) -\nabla F_{0}\left(\nabla u\right), \nabla u_{\varepsilon} -\nabla u \rangle
&=\langle \nabla F_{0}\left(\nabla u_{\varepsilon}\right) -\nabla F_{0}\left(\nabla u\right), \nabla u_{\varepsilon} -\nabla u\rangle\\
&+\varepsilon\, \langle \nabla G\left(\nabla u_{\varepsilon}\right), \nabla u_{\varepsilon} -\nabla u\rangle.
\end{split}
\]
Thus the previous integral identity can be rewritten as
\[
\begin{split}
\int_{I} \left( \left(u_{\varepsilon}\right)_t, u_{\varepsilon}-u\right)_{(W^{-1,p' }, W^{1,p}_0)}\,dt&+\iint_{I\times\Omega} \langle \nabla F_0(\nabla u_{\varepsilon}), \nabla u_{\varepsilon} -\nabla u \rangle \,dt\,dx\\
&+\varepsilon\,\iint_{I\times \Omega} \langle \nabla G\left(\nabla u_{\varepsilon}\right), \nabla u_{\varepsilon} -\nabla u\rangle\,dt\,dx=0.
\end{split}
\]
Starting from \eqref{weak-ter}, we have similarly
\[
\int_{I} \left( u_t, u_{\varepsilon}-u \right)_{(W^{-1,p' }, W^{1,p}_0)}\,dt+\iint_{I\times\Omega} \langle \nabla F_{0}\left(\nabla u\right), \nabla u_{\varepsilon} -\nabla u \rangle \,dt\,dx=0.
\]
Upon substracting the two identities above, we get
\begin{equation}
\label{sottratte}
\begin{split}
\iint_{I\times\Omega} \left(  \left(u_{\varepsilon}\right)_t-u_t, u_{\varepsilon}-u \right)_{(W^{-1,p' }, W^{1,p}_0)}dt&+\iint_{I\times\Omega} \langle \nabla F_{\varepsilon}\left(\nabla u_{\varepsilon}\right) -\nabla F_{0}\left(\nabla u\right), \nabla  u_{\varepsilon} -\nabla u \rangle \,dt\,dx\\
&+\varepsilon\,\iint_{I\times \Omega} \langle \nabla G\left(\nabla u_{\varepsilon}\right), \nabla u_{\varepsilon} -\nabla u\rangle\,dt\,dx=0.
\end{split}
\end{equation}
For the first term, we rely on 
\[
\int_{I} \left(  \left(u_{\varepsilon}\right)_t-u_t, u_{\varepsilon}-u \right)_{(W^{-1,p' }, W^{1,p}_0)}\,dt
=\frac{1}{2} \int_{\{T_1\}\times\Omega}|u_{\varepsilon} - u|^2\,dx,
\]
which follows from the fact that 
\[
t\mapsto \frac{1}{2}\,\int_\Omega |u_\varepsilon(t,x)-u(t,x)|^2\,dx,
\]
is absolutely continuous on $I$, with derivative given exactly by 
\[
\Big(\left(u_{\varepsilon}\right)_t-u_t, u_{\varepsilon}-u \Big)_{(W^{-1,p' }, W^{1,p}_0)},\qquad \mbox{ for a.\,e. }t\in I,
\]
see \cite[Proposition 1.2]{Sh}.
\par
For the second term in \eqref{sottratte}, we can use that for every \(\xi, \zeta \in \mathbb{R}^N\), we have
\[
\begin{split}
\langle \nabla F_0(\xi) - \nabla F_0(\zeta), \xi-\zeta \rangle&=\sum_{i=1}^N \big(|\xi_i|^{p-2}\,\xi_i-|\zeta_i|^{p-2}\,\zeta_i\big)\,\big(\xi_i-\zeta_i\big)\\
&\ge 2^{2-p}\,\sum_{i=1}^N |\xi_i-\zeta_i|^p \geq \frac{1}{C}\, |\xi-\zeta|^p,
\end{split}
\]
for some \(C=C(N,p)>0\). The first inequality can be found in \cite[Section 10]{Lin}. On the other hand, the convexity of \(G\) implies that $\nabla G$ is a monotone map and thus
\[
\langle \nabla G\left(\nabla u_\varepsilon\right), \nabla u_{\varepsilon} - \nabla u  \rangle \geq \langle \nabla G(\nabla u), \nabla u_{\varepsilon} -\nabla u \rangle.
\]
By using these two pointwise estimates in \eqref{sottratte}, we thus get
\[
\frac{1}{2}\, \int_{\{T_1\}\times\Omega}|u_{\varepsilon} - u|^2\,dx
+\frac{1}{C}\, \iint_{I\times\Omega}\left|\nabla u_{\varepsilon} -\nabla u \right|^p\,dt\,dx
\leq \varepsilon \iint_{I\times\Omega} \left| \langle \nabla G\left(\nabla u\right), \nabla u_{\varepsilon} -\nabla u\rangle\right|\,dt\,dx.
\]
By the Cauchy-Schwarz inequality and the inequality \(|\nabla G(\xi)|\leq (1+ |\xi|)^{p-1}\), the right-hand side is not larger than
\[
\varepsilon \iint_{I\times\Omega} (1+|\nabla u|)^{p-1}\left| \nabla u_{\varepsilon} -u\right|\,dt\,dx,
\]
which, by the  Young inequality, in turn can be bounded from above by
\[
\frac{p-1}{p}\,\varepsilon \iint_{I\times\Omega}\left|\nabla u_{\varepsilon} -\nabla u\right|^p\,dt\,dx + \frac{\varepsilon}{p}\,\iint_{I\times\Omega} (1+|\nabla u|)^p\,dt\,dx.
\]
Then for every  \(\varepsilon>0\), we have
\[
\frac{1}{2} \int_{\{T_1\}\times\Omega}|u_{\varepsilon} - u|^2\,dx
+\left(\frac{1}{C}-\frac{p-1}{p}\,\varepsilon\right) \iint_{I\times\Omega}\left|\nabla u_{\varepsilon} -\nabla u\right|^p\,dt\,dx
\leq \frac{\varepsilon}{p}\,\iint_{I\times\Omega} (1+|\nabla u|)^p\,dt\,dx.
\]
This estimate guarantees that the family \(\{\nabla u_{\varepsilon}\}_{\varepsilon>0}\) is strongly convergent to $\nabla u$ in  \(L^{p}(I\times\Omega)\). This is now sufficient to pass to the limit in the uniform Lipschitz estimate \eqref{uepsbound}, which will be still valid for $u$, as well. By a covering argument, this in turn implies
\[
\nabla u\in L^\infty_{\rm loc}(I\times\Omega),
\]
as claimed. 
\vskip.2cm\noindent
{\bf Part 2: scale invariant estimate}. We finally focus on obtaining the estimate \eqref{stimapriori}. We fix
\[
\mathcal{Q}_{\tau,R}(t_0,x_0)=(t_0-\tau,t_0)\times (x_0-R,x_0+R)^N\Subset I\times\Omega,
\]
as in the statement of the Main Theorem. 
By recalling Remark \ref{rem:scaling}, we know that for every $R>0$ and $\mu>0$ the function 
\[
U_R(t,x)=\mu\,u(t_0+\mu^{p-2}\,R^p\,t,x_0+R\,x),
\]
is still a local weak solution of our parabolic equation, this time in the rescaled set
\[
\widetilde{I}\times \widetilde{\Omega}:=\left(\frac{T_0-t_0}{\mu^{p-2}\,R^p},\frac{T_1-t_0}{\mu^{p-2}\,R^p}\right)\times\frac{\Omega-x_0}{R}.
\]
We consider the compactly contained cube \(\mathcal{Q}_{\frac{\tau}{\mu^{p-2}R^p},1}(0,0)\Subset \widetilde{I}\times \widetilde{\Omega}\) and make the choice
\[
\mu^{p-2}\,R^p=\tau, \qquad \mbox{ that is, }\qquad \mu=\left(\frac{\tau}{R^p}\right)^\frac{1}{p-2},
\]
so that 
\[
\mathcal{Q}_{\frac{\tau}{\mu^{p-2}R^p},1}(0,0)=\mathcal{Q}_{1,1}(0,0)=Q_{1}(0,0).
\]
From {\bf Part 1}, we know that we can use the a priori estimate (\ref{uepsbound}) for $U_R$ 
on the anisotropic parabolic cubes  
\[
Q_{\frac{1}{2}}(0,0)=\left(-\frac{1}{2^p},0\right)\times\left(-\frac{1}{2},\frac{1}{2}\right)^N\qquad \mbox{ and }\qquad Q_1(0,0)=(-1,0)\times(-1,1)^N.
\]
This gives
\[
\|\nabla U_R\|_{L^{\infty}(Q_\frac{1}{2}(0,0))} \leq C\,\left(\iint_{Q_1(0,0)} |\nabla U_R|^p\,dt\,dx\right)^\frac{1}{2}  + C.
\]
If we scale back, we get 
\begin{equation}
\label{basis}
\|\nabla u\|_{L^\infty(\mathcal{Q}_{\frac{\tau}{2^p},\frac{R}{2}}(t_0,x_0))}\le C\,\left(\frac{\tau}{R^2}\right)^\frac{1}{2}\,\left(\fint_{\mathcal{Q}_{\tau,R}(t_0,x_0)} |\nabla u|^p\,dt\,dx\right)^\frac{1}{2}+C\,\left(\frac{R^2}{\tau}\right)^\frac{1}{p-2}.
\end{equation}
This already proves the claimed a priori estimate \eqref{stimapriori} for $0<\sigma\le 1/2^p$. 
\par
For any $\sigma\in (0,1)$, we  can take $(t_1,x_1)\in \mathcal{Q}_{\sigma\tau,\sigma R}(t_0,x_0)$ such that
\[
\|\nabla u\|_{L^\infty(\mathcal{Q}_{\sigma \tau,\sigma R}(t_0,x_0))}\leq 
\|\nabla u\|_{L^\infty(\mathcal{Q}_{\frac{1-\sigma}{2^p}\tau,\frac{1-\sigma}{2}R}(t_1,x_1))}.
\] 
Observe that for every $(s,y)\in \mathcal{Q}_{\sigma \tau,\sigma R}(t_0,x_0)$, we have
\[
\mathcal{Q}_{\frac{1-\sigma}{2^p}\tau,\frac{1-\sigma}{2}R}(s,y)\subset \mathcal{Q}_{(1-\sigma)\,\tau,(1-\sigma)\,R}(s,y) \Subset \mathcal{Q}_{\tau,R}(t_0,x_0).
\]
By applying \eqref{basis} on this parabolic cylinder, we get
\[
\begin{split}
\|\nabla u\|_{L^\infty(\mathcal{Q}_{\sigma\tau,\sigma R}(t_0,x_0))}&\leq 
\|\nabla u\|_{L^\infty(\mathcal{Q}_{\frac{1-\sigma}{2^p}\tau,\frac{1-\sigma}{2}R}(t_1,x_1))}\\
&\le \frac{C}{\sqrt{1-\sigma}}\,\left(\frac{\tau}{R^2}\right)^\frac{1}{2}\,\left(\fint_{\mathcal{Q}_{(1-\sigma)\tau,(1-\sigma)R}(t_1,x_1)} |\nabla u|^p\,dt\,dx\right)^\frac{1}{2}\\
&+C\,(1-\sigma)^\frac{1}{p-2}\,\left(\frac{R^2}{\tau}\right)^\frac{1}{p-2}.
\end{split}
\]
By observing that 
\[
\begin{split}
\fint_{\mathcal{Q}_{(1-\sigma)\tau,(1-\sigma)R}(t_1,x_1)} |\nabla u|^p\,dt\,dx&=\frac{1}{(1-\sigma)^{N+1}\,(2\,R)^N\,\tau}\,\int_{\mathcal{Q}_{(1-\sigma)\tau,(1-\sigma)R}(t_1,x_1)} |\nabla u|^p\,dt\,dx\\
&\le \frac{1}{(1-\sigma)^{N+1}\,(2\,R)^N\,\tau}\,\int_{\mathcal{Q}_{\tau,R}(t_0,x_0)} |\nabla u|^p\,dt\,dx\\
&= \frac{1}{(1-\sigma)^{N+1}}\,\fint_{\mathcal{Q}_{\tau,R}(t_0,x_0)} |\nabla u|^p\,dt\,dx,
\end{split}
\]
we eventually reach the desired conclusion.


\begin{thebibliography}{100}

\bibitem{BDM} V. B{\"o}gelein, F. Duzaar, P. Marcellini, Parabolic equations with \(p, q-\)growth, J. Math. Pures Appl., {\bf 100} (2013), 535--563.

\bibitem{BDMi} V. B{\"o}gelein, F. Duzaar, G. Mingione, The regularity of general parabolic systems with degenerate diffusion,  Mem. Amer. Math. Soc., {\bf 221} (2013).

\bibitem{BouBra} P. Bousquet, L. Brasco, $C^1$ regularity of orthotropic $p-$harmonic functions in the plane, Anal. PDE, {\bf 11} (2018), 813--854.

\bibitem{BouBraJul} P. Bousquet, L. Brasco, V. Julin, Lipschitz regularity for local minimizers of some widely degenerate problems, Ann. Scuola Norm. Sup. Pisa Cl. Sci. (5), {\bf 16} (2016), 1235--1274.

\bibitem{BouBraLeoVer} P. Bousquet, L. Brasco, C. Leone, A. Verde, On the Lipschitz character of orthotropic $p-$harmonic functions, Calc. Var. Partial Differential Equations, {\bf 57} (2018), 57--88.

\bibitem{BraCar} L. Brasco, G. Carlier, On certain anisotropic elliptic equations arising in congested optimal transport: local gradient bounds, Adv. Calc. Var., {\bf 7} (2014), 379--407.

\bibitem{BLPV} L. Brasco, C. Leone, G. Pisante, A. Verde, Sobolev and Lipschitz regularity for local minimizers of widely degenerate anisotropic functionals, Nonlinear Anal., {\bf 153} (2017), 169--199. 

\bibitem{Chedib88} Y. Z. Chen,  E. DiBenedetto,  On the local behavior of solutions of singular parabolic equations, Arch. Rational Mech. Anal., {\bf 103} (1988), 319--345.

\bibitem{Choe1} H. J. Choe, H\"older regularity for the gradient of solutions of certain singular parabolic equations, Comm. Partial Differrential Equations, {\bf 16} (1991), 1709--1732.

\bibitem{CMV} S. Ciani, S. Mosconi, V. Vespri, Parabolic Harnack estimates for anisotropic slow diffusion, preprint (2020), available at {\tt https://arxiv.org/abs/2012.09685}

\bibitem{D} F. Demengel, Lipschitz interior regularity for the viscosity and weak solutions of the pseudo $p-$Laplacian equation, Adv. Differential Equations, {\bf 21} (2016), 373--400.

\bibitem{Dib} E. DiBenedetto, {\it Degenerate Parabolic Equations}. Universitext, Springer-Verlag, New York, 1993.

\bibitem{Dib1}  E. DiBenedetto, On the local behavior of solutions of degenerate parabolic  equations with measurable coefficients, Ann. Scuola Norm. Sup. Pisa Cl. Sci. (4), {\bf 13} (1986), 487--535.

\bibitem{DibFR} E. DiBenedetto,  A. Friedman,  Regularity of solutions of nonlinear degenerate parabolic systems, J. Reine Angew. Math., {\bf 349} (1984), 83--128.

\bibitem{DibFR1} E. DiBenedetto,  A. Friedman, H\"older estimates for nonlinear degenerate parabolic systems, J. Reine Angew. Math., {\bf 357} (1985), 1--22.

\bibitem{FVV} F. Feo, J. L. Vazquez, B. Volzone, Anisotropic $p-$Laplacian evolution of fast diffusion type, preprint (2021)

\bibitem{Gi} E. Giusti, Direct methods in the calculus of variations. World Scientific Publishing Co., Inc., River Edge, NJ, 2003.

\bibitem{KM1} T. Kuusi, G. Mingione, Riesz potentials and nonlinear parabolic equations, Arch. Ration. Mech. Anal., {\bf 212} (2014), 727--780.

\bibitem{KM2} T. Kuusi, G. Mingione, Potential estimates and gradient boundedness for nonlinear parabolic systems, 
Rev. Mat. Iberoam., {\bf 28} (2012), 535--576.

%\bibitem{LSU} O. A. Lady\v{z}enskaja, V. A. Solonnikov, N. N. Ural'ceva, Linear and quasilinear equations of parabolic type, Translations of Mathematical Monographs, Vol. {\bf 23}, American Mathematical Society, Providence, R.I. 1968

%\bibitem{Lie} G. Lieberman, Gradient estimates for a new class of degenerate elliptic and parabolic equations, Ann. Scuola Norm. Sup. Pisa Cl. Sci. (4), {\bf 21} (1994), 497--522.

\bibitem{Lin} P. Lindqvist,  {\it Notes on the $p-$Laplace equation}. Report. University of Jyv\"askyl\"a Department of Mathematics and Statistics, {\bf 102}. University of Jyv\"askyl\"a, Jyv\"askyl\"a, 2006.

\bibitem{Li} J.-L. Lions, {\it Quelques m\'ethodes de r\'esolution des probl\`emes aux limites non lin\'eaires}, Paris, Dunod, 1969.

%\bibitem{Mk} P. Z. Mkrtychyan, Estimation of the gradient of a solution and the classical solvability of the first initial-boundary value problem for a class of quasilinear nonuniformly parabolic equations, Izv. Akad. Nauk Armyan. SSR Ser. Mat., {\bf 24} (1989), 293--299. Translation in Soviet J. Contemporary Math. Anal., {\bf 24} (1989), 85--91.

\bibitem{PV} G. Pisante, A. Verde,
Regularity results for non smooth parabolic problems, Adv. Differential Equations, {\bf 13} (2008), 367--398.

\bibitem{Sh} R. E. Showalter, {\it Monotone Operators in Banach Spaces and Nonlinear Partial Differential Equations}. Mathematical Surveys and Monographs, {\bf 49}. American Mathematical Society, Providence, RI, 1997.

\bibitem{TT} Al. S. Tersenov, Ar. S. Tersenov, Existence of Lipschitz continuous solutions to the Cauchy-Dirichlet problem for anisotropic parabolic equations, J. Funct. Anal., {\bf 272} (2017), 3965--3986

\bibitem{Uh} K. Uhlenbeck, Regularity for a class of non-linear elliptic systems,
Acta Math., {\bf 138} (1977), 219--240.

\bibitem{Ur} N. N. Ural'ceva, Degenerate quasilinear elliptic systems, 
Zap. Nau\v{c}n. Sem. Leningrad. Otdel. Mat. Inst. Steklov. (LOMI), {\bf 7} (1968), 184--222. 

\bibitem{Vi} M. I. Vishik, Solubility of boundary-value problems for quasi-linear parabolic equations of higher orders,  Mat. Sb. (N.S.), {\bf 59 (101)} (1962) suppl., 289--325.

\bibitem{Wi} M. Wiegner, On $C_\alpha$-regularity of the gradient of solutions of degenerate parabolic systems, Ann. Mat. Pura Appl., {\bf 145}  (1986), 385--405.

\bibitem{Zei} E. Zeidler, {\it Nonlinear functional analysis and its applications II/A -- Linear monotone operators},  Springer-Verlag, New York, 1989


\end{thebibliography}
\end{document}